\documentclass[10pt] {amsart}

\usepackage{geometry}
\usepackage{multirow}
\usepackage{comment}
\usepackage{color}
\usepackage{amssymb}
\usepackage{amsmath}
\usepackage{amsthm}
\usepackage{amsfonts}
\usepackage{tikz}
\usetikzlibrary{matrix}
\usepackage[latin1]{inputenc}

\usetikzlibrary{arrows}

\geometry{margin=1.25in}
\setlength{\parskip}{1.5ex}

\newcommand\restr[2]{{
  \left.\kern-\nulldelimiterspace 
  #1 
  \vphantom{\big|} 
  \right|_{#2} 
  }}

\def\nicefrac#1#2{\leavevmode%
    \raise.5ex\hbox{\tiny #1}%
    \kern-.1em/\kern-.15em%
    \lower.25ex\hbox{\tiny #2}}
  
\newcommand\slimminus{\hspace{-1pt}-\hspace{-1pt}}
\newcommand\vslimminus{\hspace{-2pt}-\hspace{-2pt}}
\newcommand\vslimplus{\hspace{-2pt}+\hspace{-2pt}}
 
\newtheorem{theorem}{Theorem}[section]
\newtheorem{lemma}[theorem]{Lemma}
\newtheorem{proposition}[theorem]{Proposition}
\newtheorem{corollary}[theorem]{Corollary}

\theoremstyle{definition}
\newtheorem{definition}[theorem]{Definition}
\newtheorem{remark}[theorem]{Remark}
\newtheorem{example}[theorem]{Example}

\setcounter{equation}{0}


\definecolor{gray1}{rgb}{0.22,0.22,0.22}
\definecolor{gray2}{rgb}{0.28,0.28,0.28}
\definecolor{gray3}{rgb}{0.36,0.36,0.36}
\definecolor{gray4}{rgb}{0.44,0.44,0.44}
\definecolor{gray5}{rgb}{0.52,0.52,0.52}
\definecolor{gray6}{rgb}{0.6,0.6,0.6}
\definecolor{gray7}{rgb}{0.68,0.68,0.68}
\definecolor{gray8}{rgb}{0.76,0.76,0.76}

\begin{document}

\title{Torus Invariant Curves}

\author{Geoffrey Scott}
\thanks{The author was partially supported by NSF RTG grant DMS-1045119}
\thanks{The author was partially supported by NSF RTG grant DMS-0943832}
\email{gsscott@umich.edu}

\begin{abstract}

Using the language of T-varieties, we study torus invariant curves on a complete normal variety $X$ with an effective codimension-one torus action. In the same way that the $T$-invariant Weil divisors on $X$ are sums of ``vertical'' divisors and ``horizontal'' divisors, so too is each $T$-invariant curve a sum of ``vertical'' curves and ``horizontal'' curves. We give combinatorial formulas that calculate the intersection between $T$-invariant divisors and $T$-invariant curves, and generalize the celebrated toric cone theorem to the case of complete complexity-one $T$-varieties.

\end{abstract}

\maketitle

\section{Introduction}
A $T$-variety is a normal complex algebraic variety with an effective action of an algebraic torus. This definition matches the definition of a toric variety, except that the dimension of the torus may be less than the dimension of the variety on which it acts. In particular, any normal algebraic variety is a $T$-variety when endowed with the trivial action of $(\mathbb{C}^*)^0$. We therefore can't expect to prove much about general $T$-varieties; we usually restrict our attention to {\it complexity-one} $T$-varieties, where the dimension of the torus is exactly one less than the dimension of the variety. In this paper, we study the $T$-invariant curves of a complete complexity-one $T$-variety, find formulas for their intersection with $T$-invariant divisors (using the theory of $T$-invariant divisors developed by Petersen and S\"u\ss \ in \cite{tid}), and prove that the numerical equivalence classes of these curves generate the Mori cone of the $T$-variety.

We review the basics of $T$-varieties in Section \ref{sec_primer}. Informally speaking, a complexity-one $T$-variety is encoded by a family (parametrized by a projective curve $Y$) of polyhedral subdivisions of a vector space, all with the same tailfan. In Section \ref{sec_curves}, we describe two kinds of $T$-invariant curves in a $T$-variety, {\it vertical} curves and {\it horizontal} curves. The vertical curves correspond to walls (codimension-one strata) of one of these polyhedral subdivisions, while the horizontal curves correspond to certain maximal-dimensional cones of the tailfan. We give formulas that calculate the intersection of these curves with a $T$-invariant divisor using the language of Cartier support functions from \cite{tid}.

In Section \ref{sec_tct}, we generalize the toric cone theorem, which states that the Mori cone of a toric variety is generated as a cone by the classes of $T$-invariant curves corresponding to the walls of its fan. In our generalization, we show that the Mori cone of a complete complexity-one $T$-variety is generated as a cone by the classes of a finite collection of vertical curves and horizontal curves. We end the paper with examples in Section \ref{sec_examples}.

\section{Primer on $T$-varieties}\label{sec_primer}

In this section, we review the basic notation and construction of $T$-varieties. The presentation favors brevity over pedogogy; we encourage any reader unfamiliar with $T$-varieties to read the excellent survey article \cite{gotv} for a friendlier exposition to this beautiful topic.

\subsection{Notation}
Let $T \cong (\mathbb{C}^*)^k$ be an algebraic torus, and $M, N$ be the character lattice of $T$ and the lattice of 1-parameter subgroups of $T$ respectively. These lattices embed in the vector spaces
\[
N_{\mathbb{Q}} := \mathbb{Q} \otimes N \hspace{2cm} M_{\mathbb{Q}} := \mathbb{Q} \otimes M
\]
and are dual to one another\footnote{In this paper, when a picture of $N_{\mathbb{Q}}$ is juxtaposed with a picture of $M_{\mathbb{Q}}$, the reader may assume that the bases for these vector spaces have been chosen so that the pairing between them is the standard dot product.}. In classic toric geometry, one studies the correspondence between cones (and fans) in $N_{\mathbb{Q}}$ and the toric varieties encoded by these combinatorial data. Analogously, we study $T$-varieties through the correspondence between combinatorial gadgets called $p$-divisors (and divisoral fans) and the $T$-varieties they encode. Informally speaking, a $p$-divisor is a Cartier divisor on a semiprojective variety $Y$ with polyhedral coefficients; a divisorial fan is a collection of $p$-divisors whose polyhedral coefficients ``fit together'' in a suitable way. To make formal these definitions, we begin by discussing monoids of polyhedra.

Let $\sigma$ be a pointed cone in $N_{\mathbb{Q}}$, and $\sigma^{\vee} \subseteq M_{\mathbb{Q}}$ its dual. The set $\textrm{Pol}_{\mathbb{Q}}^+(N, \sigma)$ of all polyhedra in $N_{\mathbb{Q}}$ having $\sigma$ as its tailcone (with the convention that $\emptyset \in \textrm{Pol}_{\mathbb{Q}}^+(N, \sigma)$) is a monoid under Minkowski addition with identity element $\sigma$. Any nonempty $\Delta \in \textrm{Pol}_{\mathbb{Q}}^+(N, \sigma)$ defines a map
\begin{align}\label{eqn_evaluate_polyhedron}
h_{\Delta}: \sigma^\vee &\rightarrow \mathbb{Q}\\
u &\mapsto \textnormal{min}_{v \in \Delta} \langle v, u \rangle \notag
\end{align}
called the {\it support function} of $\Delta$. A nonempty $\Delta \in \textrm{Pol}_{\mathbb{Q}}^{+}(N, \sigma)$ also defines a {\it normal quasifan} $\mathcal{N}(\Delta)$ in $M_{\mathbb{Q}}$ consisting of a cone $\lambda_F$ for each face $F$ of $\Delta$ defined by
\[
\lambda_F = \{u \in \sigma^{\vee} \mid \langle u, v \rangle = h_{\Delta}(u) \ \forall v \in F\}.
\]
The figure below shows an example of a polyhedron and its normal quasifan.

\begin{figure}[!ht]
\centering
\begin{tikzpicture}[scale = 0.6]

\path[shift = {(0, -1)}, fill = gray5] (3.1, 3.1) -- (-0.4, 3.1) -- (-1.5, 2) -- (-1.5, 1) -- (0.5, 0) -- (3.1, 0) -- cycle;

\draw[very thick, shift = {(0, -1)}, ->] (0.5, 0) -- (3.2, 0);
\draw[very thick, shift = {(0, -1)}, ->] (0.5, 0) -- (-1.5, 1) -- (-1.5, 2) -- (-0.3, 3.2);

\path[shift = {(9, 0)}, fill = gray2] (0, 0) -- (0, 3.1) -- (1.55, 3.1) -- cycle;
\path[shift = {(9, 0)}, fill = gray5] (0, 0) -- (1.55, 3.1) -- (3.1, 3.1) -- (3.1, 0) -- cycle;
\path[shift = {(9, 0)}, fill = gray8] (0, 0) -- (3.1, 0) -- (3.1, -3.1) -- cycle;

\foreach \x in {-3, ..., 3} { \foreach \y in {-3, ..., 3} { 
	\draw[fill = black, shift = {(9, 0)}] (\x, \y) circle(.7mm); 
	}}

\draw[very thick, ->, shift = {(9, 0)}] (0, 0) -- (3.2, 0);
\draw[very thick, ->, shift = {(9, 0)}] (0, 0) -- (3.2, -3.2);
\draw[very thick, ->, shift = {(9, 0)}] (0, 0) -- (1.6, 3.2);
\draw[very thick, ->, shift = {(9, 0)}] (0, 0) -- (0, 3.2);

\draw (0, -4) node[] {$\Delta$};
\draw (9, -4) node[] {$\mathcal{N}(\Delta)$};

\end{tikzpicture}
\caption{A polyhedron and its normal quasifan}
\end{figure}
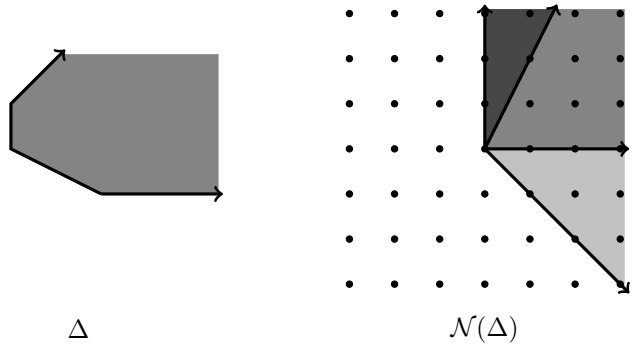

\begin{proposition}\label{prop_suppt_fxns}(\cite{poly_divisors}, Lemma 1.4 and Proposition 1.5)
The support function $h_{\Delta}$ is a well-defined map whose regions of linearity are the maximal cones of $\mathcal{N}(\Delta)$. Moreover, any function in $\textrm{Hom}(\sigma^\vee, \mathbb{Q})$ whose regions of linearity define a quasifan can be realized as $h_{\Delta}$ for some $\Delta$.
\end{proposition}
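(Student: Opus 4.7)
The plan is to treat the two assertions separately, as they are essentially independent.

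For the first half, I would start by writing $\Delta = Q + \sigma$, where $Q$ is the convex hull of the finitely many vertices of $\Delta$. Since $\langle s, u\rangle \geq 0$ for every $s \in \sigma$ and $u \in \sigma^\vee$, the minimum in the definition of $h_{\Delta}(u)$ is attained on $Q$, in fact at a vertex of $\Delta$. Thus
\[
h_{\Delta}(u) \;=\; \min_{v \in \mathrm{vert}(\Delta)} \langle v, u\rangle,
\]
which is a well-defined element of $\mathbb{Q}$, and exhibits $h_\Delta$ as the pointwise minimum of finitely many linear functionals; in particular $h_\Delta$ is concave and piecewise linear. Its maximal regions of linearity are exactly the subsets of $\sigma^\vee$ on which a single vertex $v$ realizes the minimum, and by the definition given just above the proposition these are precisely the cones $\lambda_v$ of $\mathcal{N}(\Delta)$ indexed by vertices $v$. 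Since vertices are exactly the minimal faces of $\Delta$, the cones $\lambda_v$ are the maximal cones of the normal quasifan, which proves the first claim.

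For the converse, suppose $h : \sigma^\vee \to \mathbb{Q}$ is piecewise linear with regions of linearity forming a quasifan $\Sigma$. On each maximal cone $\lambda$ of $\Sigma$ the restriction $h|_\lambda$ is a linear functional, so there is a unique $v_\lambda \in N_{\mathbb{Q}}$ with $h|_\lambda = \langle v_\lambda, -\rangle$. I would then set
\[
\Delta \;:=\; \mathrm{conv}\{v_\lambda : \lambda \in \Sigma^{\max}\} + \sigma,
\]
which automatically has tailcone $\sigma$. Checking that $h = h_\Delta$ reduces to showing that, for every maximal cone $\lambda$ of $\Sigma$ and every other maximal cone $\lambda'$, one has $\langle v_\lambda, u\rangle \leq \langle v_{\lambda'}, u\rangle$ for all $u \in \lambda$. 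At a wall shared by two adjacent maximal cones, continuity of $h$ forces $\langle v_\lambda - v_{\lambda'}, u\rangle = 0$, and concavity of $h$ (which is part of the hypothesis that the regions of linearity assemble into a quasifan rather than an arbitrary polyhedral decomposition) pushes the inequality into the interior of $\lambda$.

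The main obstacle is globalizing this local comparison: the inequality $\langle v_\lambda, u\rangle \leq \langle v_{\lambda'}, u\rangle$ must be established for all pairs $(\lambda, \lambda')$, not just adjacent ones. The strategy is to connect $\lambda$ and $\lambda'$ by a chain of adjacent maximal cones inside $\sigma^\vee$ and telescope the local inequalities, using concavity of $h$ together with the fact that the quasifan covers $\sigma^\vee$. Once this is done, the polyhedron $\Delta$ has exactly the $v_\lambda$ as its vertices and its support function is $h$, finishing the proof.
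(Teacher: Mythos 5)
The paper offers no proof of this proposition: it is quoted verbatim from \cite{poly_divisors} (Lemma 1.4 and Proposition 1.5), so there is nothing internal to compare your argument against. Your treatment of the first assertion is the standard one and is correct: since $\sigma$ is pointed, $\Delta$ is line-free, so $\Delta = \textnormal{conv}(\textnormal{vert}(\Delta)) + \sigma$, the minimum defining $h_\Delta(u)$ for $u \in \sigma^\vee$ is attained at a vertex, and $h_\Delta$ is a pointwise minimum of finitely many linear functionals whose maximal linearity regions are exactly the cones $\lambda_{\{v\}}$ attached to the vertices, i.e.\ the maximal cones of $\mathcal{N}(\Delta)$.

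The gap is in the converse, precisely where you assert that concavity of $h$ ``is part of the hypothesis that the regions of linearity assemble into a quasifan.'' It is not. Take $\sigma = \sigma^\vee = \mathbb{Q}_{\geq 0}^2$ and $h(u_1, u_2) = \max(u_1, u_2)$: this is continuous and piecewise linear, its regions of linearity are the two cones $\{u_1 \geq u_2 \geq 0\}$ and $\{u_2 \geq u_1 \geq 0\}$, which form a quasifan supported on $\sigma^\vee$, yet $h$ is convex and non-affine, hence not concave, hence not of the form $\min_{v \in \Delta}\langle v, u\rangle$ for any $\Delta$. So the quasifan condition alone cannot deliver the inequality $\langle v_\lambda, u\rangle \leq \langle v_{\lambda'}, u\rangle$ that your construction requires, and indeed the statement as transcribed in this paper is false unless concavity is added as a hypothesis --- which it is in the cited source, where the relevant functions are explicitly required to be concave (convex, up to sign convention). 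Once concavity is assumed, your argument closes, and in fact more easily than by chaining adjacent cones: a concave function lies below any linear functional with which it agrees on a full-dimensional cone (compare along a segment from an interior point of that cone), so $h \leq \langle v_{\lambda'}, \cdot\rangle$ globally for every maximal $\lambda'$, giving $h = \min_{\lambda} \langle v_\lambda, \cdot\rangle = h_\Delta$ directly for $\Delta = \textnormal{conv}\{v_\lambda\} + \sigma$. The fix, then, is to import concavity as a hypothesis rather than to derive it from the quasifan condition.
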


Let $\textrm{Pol}_{\mathbb{Q}}(N, \sigma)$ be the Grothendieck group of $\textrm{Pol}_{\mathbb{Q}}^+(N, \sigma)$. Let $Y$ be a semiprojective variety, with $\textrm{CaDiv}(Y)$ its group of Cartier divisors. An element 
\[
\mathcal{D} \in \textrm{Pol}_{\mathbb{Q}}(N, \sigma) \otimes_{\mathbb{Z}} \textrm{CaDiv}(Y)
\]
is a {\it polyhedral divisor} with tailcone $\sigma$ if it has a representative of the form $\mathcal{D} = \sum\mathcal{D}_P \otimes P$ for some $\mathcal{D}_P \in \textrm{Pol}_{\mathbb{Q}}^+(N, \sigma)$ and $P$ prime\footnote{Because $\sigma$ (not $\emptyset$) is the identity element of $\textrm{Pol}_{\mathbb{Q}}(N, \sigma)$, the summation notation in this sentence implies that only finitely many of the polyhedral coefficients $\mathcal{D}_P$ differ from $\sigma$}. We will describe a procedure for constructing an affine $T$-variety from a certain kind of polyhedral divisor (called a \emph{p-divisor}); this construction will involve taking the spectrum of the global sections of a sheaf of rings defined over a subset of $Y$. This subset, called the {\it locus} of $\mathcal{D}$, is 
\[
\textnormal{Loc}(\mathcal{D}) := Y \backslash \cup_{\mathcal{D}_P = \emptyset} P.
\]
The {\it evaluation} of $\mathcal{D}$ at $u \in M \cap \sigma^\vee$ is the $\mathbb{Q}$-Cartier divisor\footnote{Some authors define a ``$\mathbb{Q}$-Cartier'' divisor to be a Weil divisor with a Cartier multiple. Our $\mathbb{Q}$-Cartier divisors are elements of $\mathbb{Q} \otimes Div(Y)$ having a Cartier multiple (so may have rational coefficients). The pedantic reader is invited to replace all instances of ``$\mathbb{Q}$-Cartier divisor'' in this paper with ``$\mathbb{Q}$-Cartier $\mathbb{Q}$-divisor''.}
\[
\mathcal{D}(u) := \sum_{\mathcal{D}_P \neq \emptyset}h_{\mathcal{D}_P}(u)\restr{P}{\textnormal{Loc}(\mathcal{D})}.
\]
We say that $\mathcal{D}$ is a {\it $p$-divisor} if  $\mathcal{D}(u)$ is semiample for all $u \in \sigma^{\vee}$ and big for all $u$ in the interior of $\sigma^{\vee}$. The direct sum of the sheaves defined by the evaluations $\mathcal{D}(u)$ is an $M$-graded sheaf of rings
\[
\mathcal{O}(\mathcal{D}) := \bigoplus_{u \in \sigma^\vee \cap M} \mathcal{O}_{\textnormal{Loc}(\mathcal{D})}(\mathcal{D}(u))\chi^u
\]
over $\textnormal{Loc}(\mathcal{D})$. There are two different $T$-varieties encoded by the $p$-divisor $\mathcal{D}$
\[
\widetilde{TV}(\mathcal{D}) := \textnormal{Spec}_{\textnormal{Loc}(\mathcal{D})}\mathcal{O}(\mathcal{D}) \hspace{1cm} \textrm{and} \hspace{1cm} TV(\mathcal{D}) := \textnormal{Spec} \  \Gamma(\textnormal{Loc}(\mathcal{D}), \mathcal{O}(\mathcal{D}))
\]
where the torus action is given by the $M$-grading on $\mathcal{O}(\mathcal{D})$. All affine $T$-varieties can be constructed this way.
\begin{theorem}(\cite{poly_divisors}, Corollary 8.14) Every normal affine variety with an effective torus action can be realized as $TV(\mathcal{D})$ for some $p$-divisor $\mathcal{D}$
\end{theorem}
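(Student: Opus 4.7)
Write $X = \mathrm{Spec}(A)$; the effective $T$-action decomposes $A = \bigoplus_{u \in M} A_u \chi^u$. Let $\omega \subseteq M_{\mathbb{Q}}$ be the weight cone spanned by $\{u : A_u \neq 0\}$, and set $\sigma = \omega^{\vee} \subseteq N_{\mathbb{Q}}$; effectiveness of the action forces $\sigma$ to be pointed. My plan is to recover $Y$ as a semiprojective model of a rational quotient $X \dashrightarrow X /\!/ T$, and then to read the polyhedral coefficients $\mathcal{D}_P$ off of how the weight spaces $A_u$ sit inside $K(X)$.

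For the construction of $Y$: for each lattice point $w$ in the relative interior of $\omega$, the Veronese subring $A^{(w)} := \bigoplus_{n \geq 0} A_{nw}$ is a finitely generated graded $A_0$-algebra, and $Y_w := \mathrm{Proj}(A^{(w)})$ is projective over $\mathrm{Spec}(A_0)$ with an ample $\mathbb{Q}$-Cartier class. As $w$ varies these $Y_w$ are birational; I would take $Y$ to be a single normal semiprojective variety dominating all of them, for instance the normalized Chow quotient $X /\!/_{\mathrm{Ch}} T$. By construction $K(Y) \subseteq K(X)^T$, and the characters of $T$ generate $K(X)$ over $K(Y)$, so every $0\neq f \in A_u$ has the form $f = g_f \chi^u$ with $g_f \in K(Y)^{\times}$.

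For the polyhedral coefficients: for a prime divisor $P$ on $Y$ and $u \in \omega \cap M$, set
\[
h_P(u) := \max\{-\mathrm{ord}_P(g_f) : 0 \neq f \in A_u\}.
\]
Multiplicativity $A_u \cdot A_{u'} \subseteq A_{u+u'}$ combined with $\mathrm{ord}_P(g_{fh}) = \mathrm{ord}_P(g_f) + \mathrm{ord}_P(g_h)$ yields superadditivity $h_P(u) + h_P(u') \leq h_P(u+u')$, which is concavity for a function positively homogeneous of degree one. Finite generation of $A$ as an $A_0$-algebra cuts $\omega$ into finitely many regions of linearity for $h_P$, and only finitely many primes $P$ produce a nontrivial $h_P$. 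Proposition~\ref{prop_suppt_fxns} then returns a unique $\mathcal{D}_P \in \mathrm{Pol}_{\mathbb{Q}}^+(N, \sigma)$ with $h_{\mathcal{D}_P} = h_P$, and I set $\mathcal{D} := \sum_P \mathcal{D}_P \otimes P$.

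Finally, verify the $p$-divisor axioms and the isomorphism. Semiampleness of each $\mathcal{D}(u)$ on $\mathrm{Loc}(\mathcal{D})$ is inherited from the ample $\mathbb{Q}$-Cartier class on $Y_w$, and bigness on the interior of $\sigma^{\vee}$ comes from $\dim A^{(w)} = \dim Y + 1$. For $X \cong TV(\mathcal{D})$, the very definition of $h_P$ makes the equality $\Gamma(Y, \mathcal{O}_Y(\mathcal{D}(u))) = \{g_f : f \in A_u\}$ tautological, and summing over $u$ recovers $A$. The hard part of the program is arranging a single model $Y$ that coherently supports every evaluation $\mathcal{D}(u)$ as a genuine $\mathbb{Q}$-Cartier divisor (rather than only on the birational variants $Y_w$); this coherence is what forces the Chow-quotient construction, the technical core of the Altmann--Hausen theorem, which cannot be bypassed by an elementary argument.
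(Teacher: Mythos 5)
The paper does not prove this statement; it is quoted verbatim from Altmann--Hausen (\cite{poly_divisors}, Corollary 8.14) as an external input, so there is no internal proof to compare against. Measured against the actual proof in that reference, your plan is a faithful outline of the same construction: weight decomposition of $A$, the dual $\sigma=\omega^\vee$ of the weight cone, a common semiprojective model $Y$ dominating the GIT quotients $\operatorname{Proj}(A^{(w)})$, and polyhedral coefficients read off from the concave, piecewise-linear, positively homogeneous functions $h_P$ via the correspondence of Proposition \ref{prop_suppt_fxns}. You also correctly identify the technical core (a single model on which every $\mathcal{D}(u)$ is $\mathbb{Q}$-Cartier, semiample, and big in the interior) and honestly defer it, so as a \emph{plan} this is the right route and not a different one.

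Two cautions before you would call this a proof. First, the step you label ``tautological'' is only half tautological: the definition of $h_P$ gives $\{g_f : f\in A_u\}\subseteq \Gamma(\operatorname{Loc}(\mathcal{D}),\mathcal{O}(\mathcal{D}(u)))$, but the reverse inclusion --- that a rational function $g$ with $\operatorname{div}(g)+\mathcal{D}(u)\geq 0$ on $Y$ actually yields an element $g\chi^u$ of $A$ --- requires comparing the divisorial valuations of $X$ with those of $Y$ and invoking normality of $A$ (regularity in codimension one plus $S_2$); this is a genuine lemma in Altmann--Hausen, not a formal consequence of the definition. Second, piecewise linearity and positive homogeneity of $h_P$ on all of $\omega$ (not just superadditivity on lattice points) needs the observation that the maximum defining $h_P(u)$ is attained on monomials in a fixed finite generating set, so that $h_P$ is the value function of a parametric linear program; that is where finite generation actually enters. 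Neither point breaks the argument, but both must be supplied to close the sketch.
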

Similar to the way that a fan of a non-affine toric variety can be obtained by ``gluing together'' the cones constituting an affine cover, so too can a non-affine $T$-variety be encoded by ``gluing together'' the $p$-divisors constituting an affine cover. To make formal these concepts, we first define the {\it intersection} of two $p$-divisors $\mathcal{D}, \mathcal{D}'$ on $Y$ as the $p$-divisor
\[
\mathcal{D} \cap \mathcal{D}' := \sum (\mathcal{D}_P \cap \mathcal{D}_P') \otimes P\textrm{.}
\]
We say that $\mathcal{D}'$ is a {\it face} of $\mathcal{D}$ if $\mathcal{D}'_P \subseteq \mathcal{D}_P$ for each $P$ and the induced map $TV(\mathcal{D}') \rightarrow TV(\mathcal{D})$ is an open embedding. A finite set $\mathcal{S}$ of $p$-divisors on $Y$ is a {\it divisoral fan} if for any $\mathcal{D}, \mathcal{D}' \in \mathcal{S}$, $\mathcal{D} \cap \mathcal{D}'$ is an element of $\mathcal{S}$ and is a face of both $\mathcal{D}$ and $\mathcal{D}'$. We define $TV(\mathcal{S})$ and $\widetilde{TV}(\mathcal{S})$ to be the $T$-varieties obtained by gluing together the $T$-varieties $\{TV(\mathcal{D})\}_{\mathcal{D} \in \mathcal{S}}$ and $\{\widetilde{TV}(\mathcal{D})\}_{\mathcal{D} \in \mathcal{S}}$ according to these face relations. This process is detailed in \cite{fan_gluing}.
\subsection{Geometry of $TV(\mathcal{S})$ and $\widetilde{TV}(\mathcal{S})$}
Because $\widetilde{TV}(\mathcal{D})$ is defined as the relative spectrum of a sheaf of rings on $\textnormal{Loc}(\mathcal{D})$, there is a natural projection map $\pi: \widetilde{TV}(\mathcal{D}) \rightarrow \textnormal{Loc}(\mathcal{D}) \subseteq Y$. Because $TV(\mathcal{D})$ is defined as the spectrum of the global sections of the structure sheaf on $\widetilde{TV}(\mathcal{D})$, we also have a natural map $p: \widetilde{TV}(\mathcal{D}) \rightarrow \Gamma(\widetilde{TV}(\mathcal{D}), \mathcal{O}_{\widetilde{TV}(\mathcal{D})}) \cong TV(\mathcal{D})$. Given a divisoral fan $\mathcal{S}$, the maps $\pi, p$ corresponding to the different $\mathcal{D} \in \mathcal{S}$ glue into maps
\begin{figure}[!ht]
\centering
\begin{tikzpicture}
\draw (0, 1.5) node (box1){$\widetilde{TV}(\mathcal{S})$};

\draw (3, 1.5) node (box2){${TV}(\mathcal{S})$};

\draw (0, 0) node (box3){$Y$};

\draw[->] (0, 1.25) -- node[right] {$\pi$} (0, 0.25);
\draw[->] (0.6, 1.5) -- node[above] {$p$} (2.4,1.5);
\end{tikzpicture} 
\end{figure}

In this subsection, we describe the fibers of $p$ and $\pi$. In particular, we will notice that for $y \in Y$, the reduced fiber $\pi^{-1}(y)$ is a union of irreducible toric varieties, and that the contraction map $p$ identifies certain disjoint torus orbits of $\widetilde{TV}(\mathcal{S})$. Many of these results simplify when $TV(\mathcal{S})$ is a complexity-one $T$-variety; because this is the only case we will need for later sections, we will henceforth assume that $Y$ is a projective curve. The reader interested in higher-complexity $T$-varieties should read \cite{gotv} for the more general results.

In \cite{p_thesis}, the author describes the reduced fibers of $\pi$ using the language of {\it dappled toric bouquets}. We begin by reviewing this language. 
\begin{definition}
The {\it fan ring} of a quasifan $\Lambda$ in $M_{\mathbb{Q}}$ is
\[
\mathbb{C}[\Lambda] := \bigoplus_{u \in |\Lambda| \cap M} \mathbb{C}\chi^u
\]
with multiplication defined by
\[
\chi^u\chi^v = \left\{ 
\begin{array}{c c}
\chi^{u+v} & \ \textnormal{if} \ u, v \in \lambda \ \textnormal{for some cone} \ \lambda \in \Lambda \\
0          & \ \textrm{otherwise}
\end{array}\right.
\]
\end{definition}

For a nonempty $\Delta \in \textnormal{Pol}_{\mathbb{Q}}^+(N, \sigma)$ and a cone $\lambda_F$ of its inner normal quasifan $\mathcal{N}(\Delta)$, let 
\[
M_{\lambda_F} := \{u \in \lambda_F \cap M \mid h_{\Delta}(u) \in \mathbb{Z}\}.
\]
\begin{remark}
In other papers, $M_{\lambda_F}$ is defined differently: when $\Delta\otimes [P]$ appears as a summand in a $p$-divisor, the elements $u \in M_{\lambda_{F}}$ are required to satisfy the condition that $h_{\Delta}(u) [P]$ is locally principal at $P$. In the complexity-one case, this condition coincides with our condition that $h_{\Delta}(u) \in \mathbb{Z}$.
\end{remark}

Finally, let $S_{\Delta} \subseteq |\Lambda(\Delta)| \cap M$ consist of those $u$ such that $S_{\Delta} \cap \lambda_F = M_{\lambda_F}$ for every cone $\lambda_F \in \mathcal{N}(\Delta)$. $S_{\Delta}$ can be thought of as a conewise-varying sublattice of $M$. The figure below shows an example of $S_{\Delta}$ for a given $\Delta$; the elements of $S_{\Delta} \subseteq M$ are in bold.

\begin{figure}[!ht]
\centering
\begin{tikzpicture}[scale = 0.3]

\path[shift = {(-7, 0)}, fill = gray5] (4.1 * 3, 4.1 * 3) -- (2.1 * 3, 4.1 * 3) -- (0.5 * 3, 2.5 * 3) -- (0.5 * 3, 1.334 * 3) -- (2 * 3, .58334 * 3) -- (4.1 * 3, 0.58334 * 3) -- cycle;

\draw[very thick, shift = {(-7, 0)}] (0.5 * 3, 2.5 * 3) -- (0.5 * 3, 1.334 * 3) -- (2 * 3, .58334 * 3) -- (4.2 * 3, 0.58334 * 3);
\draw[very thick, shift = {(-7, 0)}] (0.5 * 3, 2.5 * 3) -- (2.2 * 3, 4.2 * 3);

\foreach \x in {-1, ..., 4} { \foreach \y in {-1, ..., 4} { 
	\draw[fill = black, shift = {(-7, 0)}] (\x * 3, \y * 3) circle(1mm); 
	}}
	
\draw[ shift = {(-7, 0)}] (0, -1.2 * 3) -- (0, 4.2 * 3); \draw[ shift = {(-7, 0)}] (-1.2 * 3, 0) -- (4.2 * 3, 0);

\draw[ shift = {(-7, 0)}] (0.5 * 3, 0.2) -- (0.5 * 3, -0.2) node[below] {$\frac{1}{2}$};
\draw[ shift = {(-7, 0)}] (0.2, 0.58334*3) -- (-0.2, 0.58334 * 3) node[left] {$\frac{7}{12}$};
\draw[ shift = {(-7, 0)}] (0.2, 1.334 * 3) -- (-0.2, 1.334 * 3) node[left] {$\frac{4}{3}$};
\draw[ shift = {(-7, 0)}] (0.2, 2.5 * 3) -- (-0.2, 2.5 * 3) node[left] {$\frac{5}{2}$};

\foreach \x in {-1, ..., 12} { \foreach \y in {-4, ..., 14} { 
	\draw[fill = black, shift = {(9, 0)}] (\x, \y) circle(.7mm); 
	}}

\foreach \x in {0, 1, ..., 6} { \draw[fill = black, shift = {(9, 0)}] (\x, 12) circle(1.5mm); }
\foreach \x in {6, 8, ..., 12} { \draw[fill = black, shift = {(9, 0)}] (\x, 12) circle(1.5mm); }
\foreach \x in {6, 8, ..., 12} { \draw[fill = black, shift = {(9, 0)}] (\x, 9) circle(1.5mm); }
\foreach \x in {4, 6, ..., 12} { \draw[fill = black, shift = {(9, 0)}] (\x, 6) circle(1.5mm); }
\foreach \x in {2, 4, ..., 12} { \draw[fill = black, shift = {(9, 0)}] (\x, 3) circle(1.5mm); }	
\foreach \x in {0, 2, ..., 12} { \draw[fill = black, shift = {(9, 0)}] (\x, 0) circle(1.5mm); }
\foreach \x in {1, 3, ..., 12} { \draw[fill = black, shift = {(9, 0)}] (\x, -1) circle(1.5mm); }
\foreach \x in {2, 4, ..., 12} { \draw[fill = black, shift = {(9, 0)}] (\x, -2) circle(1.5mm); }
\foreach \x in {3, 5, ..., 12} { \draw[fill = black, shift = {(9, 0)}] (\x, -3) circle(1.5mm); }
\foreach \x in {4, 6, ..., 12} { \draw[fill = black, shift = {(9, 0)}] (\x, -4) circle(1.5mm); }

\draw[thick, ->, shift = {(9, 0)}] (0, 0) -- (12.2, 0);
\draw[thick, ->, shift = {(9, 0)}] (0, 0) -- (4.2, -4.2);
\draw[thick, ->, shift = {(9, 0)}] (0, 0) -- (7.1, 14.2);
\draw[thick, ->, shift = {(9, 0)}] (0, 0) -- (0, 14.2);

\draw (-4, -6) node[] {$\Delta \subseteq N_{\mathbb{Q}}$};
\draw (15, -6) node[] {$S_{\Delta} \subseteq M$};
\end{tikzpicture}
\caption{$S_{\Delta}$ is a conewise-varying sublattice of $M$}
\end{figure}
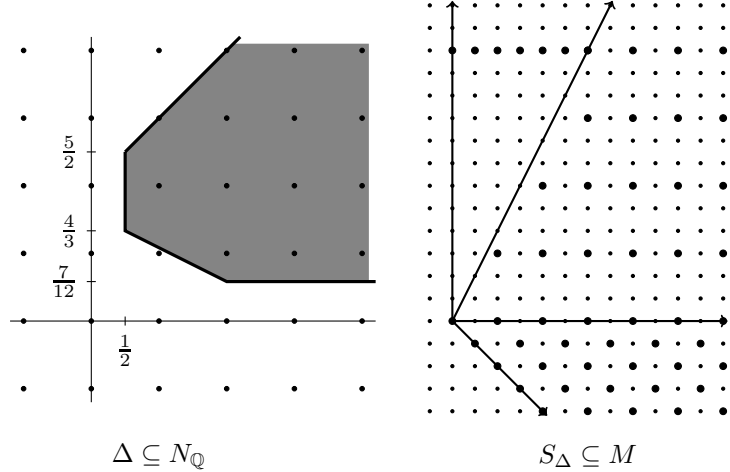

\begin{definition}
The {\it dappled fan ring} of $\Delta$ is the following subring of $\mathbb{C}[\mathcal{N}(\Delta)]$
\[
\mathbb{C}[\mathcal{N}(\Delta), S_{\Delta}] := \bigoplus_{u \in S_{\Delta}} \mathbb{C}\chi^u
\]
\end{definition}
\begin{definition}
The {\it dappled toric bouquet} encoded by $\Delta$ is the variety $TB(\Delta) := \textnormal{Spec}(\mathbb{C}[\mathcal{N}(\Delta), S_{\Delta}])$. Given a polyhedral complex $\Sigma = \{\Delta\}$ in $N_{\mathbb{Q}}$, the dappled toric bouquet encoded by $\Sigma$ is the variety $TB(\Sigma)$ obtained by gluing the $\{TB(\Delta)\}_{\Delta \in \Sigma}$ according to the face relations among the polyhedra.
\end{definition}
Observe that $TB(\Delta)$ and $TB(\Sigma)$ have a natural torus action induced by the $M$-grading of the dappled fan rings. For a $T$-variety $TV(\mathcal{S})$ over $Y$ and a point $y \in Y$, the polyhedra $\{\mathcal{D}_y\}_{\mathcal{D} \in \mathcal{S}}$ fit together into a polyhedral complex $\mathcal{S}_y$ of $N_{\mathbb{Q}}$.
\begin{proposition}\label{prop_tbp} [\cite{p_thesis}, Prop 1.29] Let $\mathcal{S}$ be a p-divisor on the smooth projective curve $Y$. The reduced fiber $\pi^{-1}(y)$ of $\pi: \widetilde{TV}(\mathcal{S}) \rightarrow Y$ is equivariantly isomorphic to $TB(\mathcal{S}_y)$. 
\end{proposition}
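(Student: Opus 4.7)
The plan is to compute the fiber of $\pi$ locally on $Y$ using a uniformizer at $y$, then pass to the reduction and match the resulting $M$-graded ring with the dappled fan ring. Since the passage from an affine p-divisor to a divisorial fan is purely a gluing operation --- and a face relation $\mathcal{D}' \subseteq \mathcal{D}$ of p-divisors manifestly restricts at $y$ to the face relation $\mathcal{D}'_y \subseteq \mathcal{D}_y$ of polyhedra --- it suffices to prove the result for a single p-divisor $\mathcal{D}$ with $y \in \textnormal{Loc}(\mathcal{D})$, and then check that the gluing data for $\widetilde{TV}(\mathcal{S})$ at $y$ matches the gluing data defining $TB(\mathcal{S}_y)$.

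Fix a uniformizer $t \in \mathcal{O}_{Y,y}$ and write $h := h_{\mathcal{D}_y}$. The fiber ring is
\[
R := \mathcal{O}(\mathcal{D}) \otimes_{\mathcal{O}_Y} k(y) = \bigoplus_{u \in \sigma^\vee \cap M} \mathcal{O}_Y(\mathcal{D}(u))_y \otimes_{\mathcal{O}_{Y,y}} \mathbb{C} \cdot \chi^u,
\]
where each summand is one-dimensional with generator $s_u := t^{-\lfloor h(u) \rfloor} \chi^u$ (using the convention $\mathcal{O}_Y(D) = \mathcal{O}_Y(\lfloor D \rfloor)$ for $\mathbb{Q}$-divisors). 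Because $h$ is concave as a pointwise minimum of linear functionals, $\mathcal{D}(u) + \mathcal{D}(u') \leq \mathcal{D}(u+u')$ as $\mathbb{Q}$-divisors, and the multiplication inherited from $\mathcal{O}(\mathcal{D})$ takes the form
\[
s_u \cdot s_{u'} = t^{\,\lfloor h(u+u') \rfloor - \lfloor h(u) \rfloor - \lfloor h(u') \rfloor} \cdot s_{u+u'},
\]
whose exponent is a nonnegative integer.

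My next step is to identify $R_{\textnormal{red}}$ with $\mathbb{C}[\mathcal{N}(\mathcal{D}_y), S_{\mathcal{D}_y}]$. First, if $h(u) \notin \mathbb{Z}$, let $q > 1$ be the denominator of $h(u)$ in lowest terms; positive homogeneity of $h$ along the ray through $u$ gives $h(qu) = qh(u) \in \mathbb{Z}$, and telescoping the product formula yields $s_u^q = t^{\,h(qu) - q\lfloor h(u) \rfloor} s_{qu}$, whose exponent is a positive integer, so $s_u$ is nilpotent. Hence $R_{\textnormal{red}}$ is supported on degrees $u \in S_{\mathcal{D}_y}$. Second, for $u, u' \in S_{\mathcal{D}_y}$ the exponent in the product formula vanishes precisely when $h(u+u') = h(u) + h(u')$, which by the definition of the normal quasifan holds if and only if $u$ and $u'$ lie in a common cone of $\mathcal{N}(\mathcal{D}_y)$; otherwise the difference is a positive integer and $s_u \cdot s_{u'} = 0$ in the fiber. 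This reproduces exactly the multiplication rule of the dappled fan ring, and since both the isomorphism and the multiplication preserve the $M$-grading, I obtain the equivariant isomorphism $\pi^{-1}(y)_{\textnormal{red}} \cong TB(\mathcal{D}_y)$.

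The main obstacle will be the floor-function bookkeeping in the previous paragraph: isolating the nilradical as exactly the span of those $s_u$ with $u \notin S_{\mathcal{D}_y}$, and verifying that the residual product picks out precisely the cone-dependent multiplication of the dappled fan ring rather than the full fan ring $\mathbb{C}[\mathcal{N}(\mathcal{D}_y)]$ or some coarser quotient.
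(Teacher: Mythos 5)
The paper does not actually prove this proposition: it is imported wholesale from [P, Prop.~1.29] (ultimately [AH03, Prop.~7.10]), and the only trace of the argument that appears later is the recollection, inside the proof of Proposition \ref{prop_vc}, that the isomorphism is induced by a \emph{multiplicative} system of local trivializations $g_{\mathcal{D}(u+u')} = g_{\mathcal{D}(u)}g_{\mathcal{D}(u')}$. Your route is genuinely different and self-contained: trivializing each stalk $\mathcal{O}_Y(\mathcal{D}(u))_y$ by the non-multiplicative generators $t^{-\lfloor h(u)\rfloor}$ makes the fiber ring completely explicit, at the cost of the exponent $\lfloor h(u+u')\rfloor - \lfloor h(u)\rfloor - \lfloor h(u')\rfloor$. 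The overall architecture (reduce to one chart, compute the fiber ring degree by degree, pass to the reduction) is sound, and your nilpotency argument for $s_u$ with $u \notin S_{\mathcal{D}_y}$ is correct.

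One step is false as stated, and it sits exactly at the spot you flagged as the crux. For $u, u' \in S_{\mathcal{D}_y}$ the exponent vanishes iff $\lfloor h(u+u')\rfloor = h(u) + h(u')$, which is strictly weaker than $h(u+u') = h(u)+h(u')$: take $\Delta = \textnormal{conv}\{(0,1),(\nicefrac{1}{3},0)\} + \mathbb{Q}_{\geq 0}^2$, so $h(u) = \min(u_2, u_1/3)$, and $u = (4,1)$, $u' = (3,2)$; then $h(u) = h(u') = 1$ but $h(u+u') = \nicefrac{7}{3}$, so the exponent is $2 - 1 - 1 = 0$ even though $u$ and $u'$ lie in different maximal cones of $\mathcal{N}(\Delta)$. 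Hence $s_u s_{u'} = s_{u+u'} \neq 0$ already in the unreduced fiber ring, and your claimed biconditional fails. The argument is rescued by one extra observation: whenever the exponent vanishes but $h(u+u') > h(u) + h(u')$, the value $h(u+u')$ lies strictly between the consecutive integers $h(u)+h(u')$ and $h(u)+h(u')+1$, so $u + u' \notin S_{\mathcal{D}_y}$ and $s_{u+u'}$ is one of your nilpotents. Consequently the $\mathbb{C}$-linear map sending $s_u \mapsto \chi^u$ for $u \in S_{\mathcal{D}_y}$ and $s_u \mapsto 0$ otherwise is a surjective ring homomorphism onto $\mathbb{C}[\mathcal{N}(\mathcal{D}_y), S_{\mathcal{D}_y}]$ with kernel spanned by nilpotents; since the dappled fan ring is reduced (it is a subring of the fan ring), this kernel is exactly the nilradical and the identification of $R_{\textnormal{red}}$ follows. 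With that repair, and with the gluing compatibility you deferred actually written out, the proof is complete.
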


Motivated by Proposition \ref{prop_tbp} to study the geometry of non-affine toric bouquets, we construct a fan for each vertex of a polyhedral subdivision $\Sigma$ of $N_{\mathbb{Q}}$; the toric varieties they encode will be precisely the irreducible components of $TB(\Sigma)$. For a vertex $v \in \Sigma$, define the lattice 
\[
M_{v} = \{u \in M \mid \langle u, v \rangle \in \mathbb{Z}\}
\]
Because $M_{v}$ is a sublattice of $M$, $N$ is a sublattice of $N_{v} := M_v^{\vee} \subseteq N_{\mathbb{Q}}$. Let $i_{v}: N_{\mathbb{Q}} \rightarrow (N_v)_{\mathbb{Q}}$ be the map induced by this inclusion. As $\Delta$ ranges over all polyhedra in $\Sigma$ containing $v$, the cones $i_v(\mathbb{Q}_{\geq 0} \cdot (\Delta - v))$ form a fan $F_v$ in $(N_v)_{\mathbb{Q}}$. For any cone $\sigma = i_v(\mathbb{Q}_{\geq 0} \cdot (\Delta - v))$ of $F_v$, the semigroup $\sigma^{\vee} \cap N_v^{\vee}$ is isomorphic to the semigroup $\lambda_{\Delta} \cap S_{\Delta}$. Because this isomorphism commutes with the gluing data induced by the face relations, we have the following description of the irreducible components of $TB(\Sigma)$.
\begin{proposition}\label{prop_irrcpt}
The irreducible components of $TB(\Sigma)$ are equivariantly isomorphic to the toric varieties $\{TV(F_{v})\}$ where the set ranges over the vertices $v$ of $\Sigma$.
\end{proposition}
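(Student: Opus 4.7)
The plan is to work locally on each affine piece $TB(\Delta)$, identify its irreducible components as affine toric varieties, and then check that these glue together across the face relations of $\Sigma$ in the same way as the cones of the fans $F_v$.

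First I would observe that the dappled fan ring $\mathbb{C}[\mathcal{N}(\Delta), S_\Delta]$ is a monoid-type ring with zero divisors: the product $\chi^u \chi^{u'}$ vanishes whenever $u, u'$ lie in no common cone of $\mathcal{N}(\Delta)$. Consequently, its minimal primes are indexed by the maximal cones of $\mathcal{N}(\Delta)$, and the irreducible component of $TB(\Delta)$ corresponding to the maximal cone $\lambda_v$ (attached to the vertex $v \in \Delta$) is $\textrm{Spec}(\mathbb{C}[\lambda_v \cap S_\Delta])$.

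Next I would identify this component with $TV(\sigma)$, where $\sigma := i_v(\mathbb{Q}_{\geq 0} \cdot (\Delta - v))$ is the cone of $F_v$ associated to $\Delta$. A direct computation shows $\sigma^\vee = \lambda_v$: indeed $u \in \sigma^\vee$ iff $\langle u, x \rangle \geq \langle u, v\rangle$ for every $x \in \Delta$, iff $h_\Delta(u) = \langle u, v\rangle$, which cuts out exactly the normal cone at $v$. Since $h_\Delta$ agrees with the linear function $\langle \cdot, v\rangle$ on $\lambda_v$, the integrality condition $h_\Delta(u) \in \mathbb{Z}$ defining membership in $S_\Delta$ becomes $\langle u, v\rangle \in \mathbb{Z}$. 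Hence $\lambda_v \cap S_\Delta = \lambda_v \cap M_v = \sigma^\vee \cap N_v^\vee$, which yields an equivariant isomorphism between the $v$-component of $TB(\Delta)$ and the affine toric variety $TV(\sigma)$.

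Finally, I would assemble these local identifications. For each vertex $v$ of $\Sigma$, the cones $\{ i_v(\mathbb{Q}_{\geq 0}(\Delta - v))\}_{v \in \Delta \in \Sigma}$ form the fan $F_v$ by construction, and the polyhedral face relations of $\Sigma$ translate into the corresponding cone face relations of $F_v$. Since the semigroup isomorphism above is natural in $\Delta$, it intertwines the gluing of the $v$-components of the various $TB(\Delta)$ with the standard gluing of affine pieces of $TV(F_v)$, producing an equivariant closed embedding $TV(F_v) \hookrightarrow TB(\Sigma)$ whose image is an irreducible component. Every irreducible component of $TB(\Sigma)$ arises in this way, because it must meet some affine chart $TB(\Delta)$ in a component indexed by a vertex $v$ of $\Delta$.

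The main obstacle will be the naturality check in the gluing step: one must verify that when $\Delta' \subseteq \Delta$ share $v$ as a vertex, the isomorphisms $\lambda_{v, \Delta'} \cap S_{\Delta'} \cong (\sigma')^\vee \cap N_v^\vee$ and $\lambda_{v, \Delta} \cap S_\Delta \cong \sigma^\vee \cap N_v^\vee$ are compatible with the inclusions on both sides. This reduces to the observation that the lattice $M_v$ depends only on $v$ and not on the ambient polyhedron, after which the verification is bookkeeping.
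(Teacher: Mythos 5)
Your argument is correct and follows essentially the same route as the paper, which establishes this proposition via the semigroup identification $\sigma^\vee \cap N_v^\vee \cong \lambda_{\{v\}} \cap S_\Delta$ for each vertex $v$ and then appeals to compatibility with the gluing data. You simply supply more of the details the paper leaves implicit (the minimal primes of the dappled fan ring, the computation $\sigma^\vee = \lambda_{\{v\}}$ and $\lambda_{\{v\}} \cap S_\Delta = \lambda_{\{v\}} \cap M_v$, and the naturality of the identification in $\Delta$), all of which check out.
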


For example, the polyhedral complex in Figure \ref{fig_polycomp} encodes a toric bouquet with three irreducible toric components. We have drawn the lattices $N_v$ not as a square grid, but in a way that the sublattice $N \subseteq N_v$ (in bold) is a square grid so that the angles between the polyhedra are preserved. In the example, one fan encodes $\mathbb{P}^2$ and the other fans encode weighted projective spaces. 

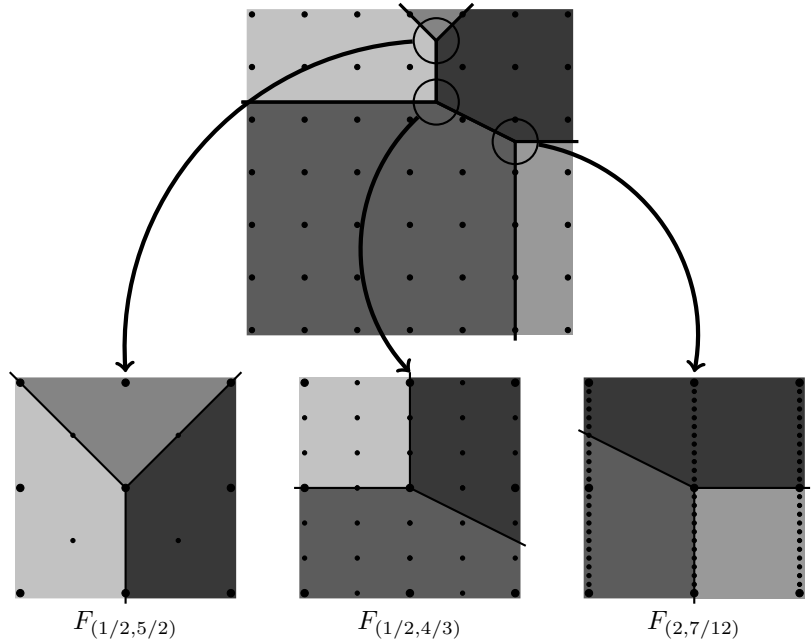
\begin{figure}[!ht]
\centering
\begin{tikzpicture}[scale = 0.7]
\path[shift = {(5, 5)}, fill = gray1] (3.1 , 3.1) -- (1.1 , 3.1) -- (0.5, 2.5) -- (0.5, 1.334 ) -- (2, 0.58334) -- (3.1, 0.58334) -- cycle;
\path[shift = {(5, 5)}, fill = gray5] (-0.1 , 3.1) -- (0.5, 2.5) -- (1.1, 3.1) -- cycle;
\path[shift = {(5, 5)}, fill = gray8] (-0.1 , 3.1) -- (0.5, 2.5) -- (0.5, 1.334 ) -- (-3.1, 1.334) -- (-3.1, 3.1) -- cycle;
\path[shift = {(5, 5)}, fill = gray3] (-3.1, 1.334)  -- (0.5, 1.334 ) -- (2, 0.58334) -- (2, -3.1) -- (-3.1, -3.1) -- cycle;
\path[shift = {(5, 5)}, fill = gray6] (2, -3.1) -- (2, 0.58334) -- (3.1, 0.58334) -- (3.1, -3.1) -- cycle;

\draw[shift = {(5, 5)}, very thick] (1.2, 3.2) -- (0.5, 2.5) -- (0.5, 1.334 ) -- (2, 0.58334) -- (3.2, 0.58334);
\draw[shift = {(5, 5)}, very thick] (-0.2, 3.2) -- (0.5, 2.5) -- (0.5, 1.334 ) -- (-3.2, 1.334);
\draw[shift = {(5, 5)}, very thick] (-3.2, 1.334) -- (0.5, 1.334) -- (2, 0.58334) -- (2, -3.2);

\foreach \x in {-3, ..., 3} { \foreach \y in {-3, ..., 3} { 
	\draw[fill = black, shift = {(5, 5)}] (\x, \y) circle(.5mm); 
	}}

\path[shift = {(-0.4, -1)}, fill = gray5] (2.1, 2.1) -- (0, 0) -- (-2.1, 2.1) -- cycle;
\path[shift = {(-0.4, -1)}, fill = gray8] (-2.1, 2.1) -- (0, 0) -- (0, -2.1) -- (-2.1, -2.1) -- cycle;
\path[shift = {(-0.4, -1)}, fill = gray1] (2.1, 2.1) -- (0, 0) -- (0, -2.1) -- (2.1, -2.1) -- cycle;

\draw[shift = {(-0.4, -1)}, thick] (0, 0) -- (2.2, 2.2)
																				(0, 0) -- (0, -2.2)
																				(0, 0) -- (-2.2, 2.2);

\path[shift = {(5, -1)}, fill = gray1] (2.1, -1.05) -- (0, 0) -- (0, 2.1) -- (2.1, 2.1) -- cycle;
\path[shift = {(5, -1)}, fill = gray8] (0, 2.1) -- (0, 0) -- (-2.1, 0) -- (-2.1, 2.1) -- cycle;
\path[shift = {(5, -1)}, fill = gray3] (-2.1, 0) -- (0, 0) -- (2.1, -1.05) -- (2.1, -2.1) -- (-2.1, -2.1) -- cycle;

\draw[shift = {(5, -1)}, thick] (0, 0) -- (2.2, -1.1)
																				(0, 0) -- (0, 2.2)
																				(0, 0) -- (-2.2, 0);

\path[shift = {(10.4, -1)}, fill = gray1] (2.1, 0) -- (0, 0) -- (-2.1, 1.05) -- (-2.1, 2.1) -- (2.1, 2.1) -- cycle;
\path[shift = {(10.4, -1)}, fill = gray3] (-2.1, 1.05) -- (0, 0) -- (0, -2.1) -- (-2.1, -2.1) -- cycle;
\path[shift = {(10.4, -1)}, fill = gray6] (0, -2.1) -- (0, 0) -- (2.1, 0) -- (2.1, -2.1) -- cycle;
																				
\draw[shift = {(10.4, -1)}, thick] (0, 0) -- (-2.2, 1.1)
																				(0, 0) -- (2.2, 0)
																				(0, 0) -- (0, -2.2);

\foreach \x in {-1, ..., 1} { \foreach \y in {-1, ..., 1} { 
	\draw[fill = black, shift = {(-0.4, -1)}] (\x * 2 , \y * 2 ) circle(.7mm);  
	\draw[fill = black, shift = {(5, -1)}] (2 * \x , 2 * \y ) circle(.7mm); 
	\draw[fill = black, shift = {(10.4, -1)}] (2 * \x , 2 *  \y ) circle(.7mm); 
	}}

\foreach \x in {-1, ..., 0} { \foreach \y in {-1, ..., 0} { 	
	\draw[fill = black, shift = {(-0.4, -1)}] (\x * 2  + 1, \y * 2  + 1) circle(.4mm);  
	}}
	
\foreach \x in {-1, ..., 0} { \foreach \y in {-1, ..., 0} { 	
	\foreach \w in {0, 1, 2}{ \foreach \z in {0, 1, 2, 3} {
	\draw[fill = black, shift = {(5, -1)}] (2 * \x  + \w, 2 * \y + 2 * \z / 3 ) circle(.4mm); }}
	}}	

\foreach \x in {-1, ..., 1} { \foreach \y in {-1, ..., 0} { 	
	\foreach \z in {1, ..., 12} {\draw[fill = black, shift = {(10.4, -1)}] (2 * \x , 2 * \y + \z / 6 ) circle(.4mm);}  
	}}

\node (topnode) at (5 + 0.5, 5 + 2.5) [circle,draw,thick, minimum size=0.6cm] {};
\node (midnode) at (5 + 0.5, 5 + 1.334) [circle,draw,thick, minimum size=0.6cm] {};
\node (botnode) at (5 + 2  , 5 + 0.58334) [circle,draw,thick, minimum size=0.6cm] {};
												
\draw[ultra thick, ->] (topnode)  to [bend right = 45] (-0.4, 1.2);
\draw[ultra thick, ->] (midnode)  to [bend right = 45] (5, 1.2);
\draw[ultra thick, ->] (botnode)  to [bend left = 45] (10.4,1.2);

\draw (-0.4, -3.6) node[] {$F_{(1/2, 5/2)}$};
\draw (5, -3.6) node[] {$F_{(1/2, 4/3)}$};
\draw (10.4, -3.6) node[] {$F_{(2, 7/12)}$};
	
\end{tikzpicture}
\caption{Components of a toric bouquet}
\label{fig_polycomp}
\end{figure}

Given a divisorial fan $\mathcal{S}$, its {\it tailfan} $\textrm{tail}(\mathcal{S})$ is the fan consisting of the tailcones of the $p$-divisors comprising $\mathcal{S}$. Because the coefficients $\mathcal{D}_y$ of a $p$-divisor $\mathcal{D}$ differ from its tailcone for only finitely many $y$, the polyhedral subdivisions $\mathcal{S}_y$ differ from $\textrm{tail}(\mathcal{S})$ for only finitely many $y$. By Proposition \ref{prop_irrcpt}, the fiber of $\pi$ over $y \in Y$ is equal to $TV(\textnormal{tail}(\mathcal{S}))$ for all but finitely many $y$ and specializes to a (possibly non-reduced) union of toric varieties at these finitely many points.

By the discussion above, the familiar orbit-cone correspondence for toric varieties translates into a correspondence between $T$-orbits in $\widetilde{TV}(\mathcal{S})$ and pairs $(y, F)$ where $y \in Y$ and $F \in \mathcal{S}_y$. To understand $TV(\mathcal{S})$, we will describe how the map $p$ identifies certain of these orbits in different fibers. We first consider the case of an affine $T$-variety. For a $p$-divisor $\mathcal{D}$ with tailcone $\sigma$ and a $u \in \sigma^{\vee} \cap M$, the semiample divisor $\mathcal{D}(u)$ defines a map 
\begin{align*}
\xi_u: \textrm{Loc}(\mathcal{D}) &\rightarrow \textnormal{Proj}\left( \bigoplus_{k \geq 0} \Gamma(\textrm{Loc}(\mathcal{D}), \mathcal{D}(ku)) \right).
\end{align*}

\begin{theorem}\label{thm_contraction} (\cite{poly_divisors}, Theorem 10.1)
The map $p: \widetilde{TV}(\mathcal{D}) \rightarrow TV(\mathcal{D})$ induces a surjection
\[
\{(y, F): y \in Y, F \ \textrm{is a face of} \  \mathcal{D}_y \} \rightarrow \{T-\textnormal{orbits in TV(}\mathcal{D}\textnormal{)}\}
\]
that identifies the orbits corresponding to $(y, F)$ and $(y', F')$ iff $\lambda_{F} = \lambda_{F'} \subseteq M_{\mathbb{Q}}$ and $\xi_u(y) = \xi_u(y')$ for some (equivalently, for any) $u \in \textnormal{relint}(\lambda_F)$.
\end{theorem}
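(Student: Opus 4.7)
The plan is to (a) parameterize the $T$-orbits of $\widetilde{TV}(\mathcal{D})$ by pairs $(y,F)$, (b) show $p$ induces a surjection onto the orbits of $TV(\mathcal{D})$, and (c) extract the identification condition from the $M$-graded algebra of global sections.

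First, I would establish the parameterization. Since $\pi$ is $T$-equivariant, every $T$-orbit of $\widetilde{TV}(\mathcal{D})$ sits inside some fiber $\pi^{-1}(y)$. By Proposition~\ref{prop_tbp} applied to the divisorial fan $\{\mathcal{D}\}$, this reduced fiber is the toric bouquet $TB(\mathcal{D}_y)$, whose $T$-orbits are indexed by faces $F$ of $\mathcal{D}_y$ (via Proposition~\ref{prop_irrcpt} and the classical orbit-cone correspondence on each irreducible component). This yields a bijection between $T$-orbits of $\widetilde{TV}(\mathcal{D})$ and pairs $(y,F)$. Since $p$ is $T$-equivariant and surjective (being the contraction to the affine hull), the composition then yields a surjection from these pairs onto the $T$-orbits of $TV(\mathcal{D})$.

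Next, I would characterize when two such orbits coincide in $TV(\mathcal{D})$ by testing them against $M$-homogeneous global sections. An element $s\chi^u \in \bigoplus_u \Gamma(\textnormal{Loc}(\mathcal{D}), \mathcal{D}(u))$ is nonzero on the orbit at $(y,F)$ precisely when $u \in \lambda_F$ and $s$ does not vanish at $y$ in the appropriate sense, namely when the section of $\mathcal{O}(\mathcal{D}(u))$ descended through $\xi_u$ is nonvanishing at $\xi_u(y)$. Hence if $\lambda_F \neq \lambda_{F'}$ any $u$ in the symmetric difference produces a character separating the two orbits; and if $\xi_u(y) \neq \xi_u(y')$ for some $u \in \textnormal{relint}(\lambda_F)$, a section on the target of $\xi_u$ pulled back to $\mathcal{D}(u)$ separates them. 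Conversely, assuming both conditions hold, every $M$-homogeneous global section takes matching nonvanishing patterns on the two orbits, so $p$ identifies them.

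Finally, for the ``some $\Leftrightarrow$ any'' clause, I would observe that for $u_1, u_2 \in \textnormal{relint}(\lambda_F)$ the maps $\xi_{u_1}$ and $\xi_{u_2}$ factor through each other: after passing to sufficiently high multiples, multiplication by any nonzero section of $\mathcal{D}(u_2 - u_1)$ induces a birational identification of the Proj models receiving them, and this identification is an isomorphism on the loci detecting $y$ versus $y'$. The main obstacle will be the sufficiency in the identification step, in particular extending the equality $\xi_u(y) = \xi_u(y')$ from interior $u \in \textnormal{relint}(\lambda_F)$ to boundary $u \in \lambda_F$: one must verify that the graded ring assembled from interior evaluations already determines the algebraic behavior at the boundary. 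This is exactly where the $p$-divisor hypotheses (semiampleness of $\mathcal{D}(u)$ on $\sigma^\vee$, bigness on the interior) carry the proof, since without them either $\xi_u$ fails to be defined at the boundary or the Proj construction fails to faithfully encode the graded data.
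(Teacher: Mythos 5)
The paper does not prove this statement: it is quoted verbatim from Altmann--Hausen (Theorem 10.1 of \cite{poly_divisors}) and used as a black box, so there is no internal proof to compare yours against. Measured against the argument in that source, your outline is structurally the right one: orbits upstairs are fibered over $Y$ (the torus acts along the fibers of $\pi$), each reduced fiber is a toric bouquet whose orbits are indexed by the faces of $\mathcal{D}_y$, $p$ is an equivariant proper surjection, and the identification of orbits is read off from the $M$-graded coordinate ring of the affine variety $TV(\mathcal{D})$. One point in your step (c) deserves to be made explicit rather than left implicit: a priori, knowing which homogeneous elements $s\chi^u$ \emph{vanish} on an orbit only pins down the homogeneous ideal of its closure, not the orbit itself; you should note that a homogeneous function vanishes at one point of a $T$-orbit iff it vanishes on the whole orbit, and that two orbits with $\lambda_F=\lambda_{F'}$ have the same dimension, so equality of closures forces equality of orbits. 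With that observation your reduction to ``matching nonvanishing patterns'' is legitimate.

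The genuine gaps are the two you flag yourself but do not close. First, the translation between ``$s\chi^u$ is nonvanishing on the orbit $(y,F)$'' and ``$u\in\lambda_F$ and the corresponding section is nonzero at $\xi_u(y)$'' is the entire content of the theorem; asserting it ``in the appropriate sense'' is a placeholder, not an argument. It requires tracking the identification of $\Gamma(\textnormal{Loc}(\mathcal{D}),\mathcal{O}(\mathcal{D}(u)))$ with degree-one sections on the Proj target of $\xi_u$, exactly as in the explicit fiber computation of Proposition~\ref{prop_vc}. Second, the ``some iff any $u\in\textnormal{relint}(\lambda_F)$'' clause needs more than a birational identification of Proj models: you must show the identification is an isomorphism over the relevant points of $Y$, which is where semiampleness (the maps $\xi_u$ contract the same curves for all $u$ in the relative interior of a fixed $\lambda_F$) actually gets used. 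As a roadmap your proposal is sound and matches the cited proof's architecture, but as written it defers precisely the steps that carry the content.
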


In the non-affine case, the gluing maps among $\{TV(\mathcal{D})\}_{\mathcal{D} \in \mathcal{S}}$ are prescribed by the face relations between the $p$-divisors, which identifies precisely those $T$-orbits in $TV(\mathcal{D})$ and $TV(\mathcal{D}')$ corresponding to the faces $\{(y, \mathcal{D}_y \cap \mathcal{D}'_y)\}_{y \in Y}$.

\section{$T$-invariant Curves and Intersection Theory}\label{sec_curves}

In this section, we study the intersection theory of complete complexity-one $T$-varieties over a projective curve $Y$. {\bf For the rest of the paper, {\it all} $T$-varieties are complete, complexity-one $T$-varieties over a projective curve $Y$.} The ``completeness'' condition translates into the combinatorial requirement that $|\mathcal{S}_y| = N_{\mathbb{Q}}$ for all $y$. Motivated by the correspondence between $T$-invariant Cartier divisors and Cartier support functions introduced in \cite{tid}, we define the notion of a $\mathbb{Q}$-Cartier support function to encode $\mathbb{Q}$-Cartier torus invariant divisors. We will describe two kinds of $T$-invariant curves -- {\it vertical} curves and {\it horizontal} curves -- then give formulas that compute the intersection of these curves with a $T$-invariant $\mathbb{Q}$-Cartier divisor. 

\begin{definition}
Given a nontrivial $\Delta \in \textnormal{Pol}^+_{\mathbb{Q}}(N, \sigma)$ and an affine $\varphi: \Delta \rightarrow \mathbb{Q}$, the {\it linear part} of $\varphi$ is the function
\begin{align*}
\textnormal{lin}{\varphi}: \sigma &\rightarrow \mathbb{Q}\\
n &\mapsto \varphi(p + n) - \varphi(p)
\end{align*}
where $p$ is any point in $\Delta$. If $\langle \sigma \rangle \subseteq N_{\mathbb{Q}}$ is the subspace spanned by $\sigma$, the function $\textnormal{lin}\varphi$ extends uniquely to a linear function $\langle \sigma \rangle \rightarrow \mathbb{Q}$, which will also be written $\textnormal{lin}\varphi$ without risk of confusion.
\end{definition}
\begin{definition} Let $\mathcal{S}$ be the divisorial fan of a complexity-one $T$-variety over $Y$. A $\mathbb{Q}$-Cartier support function is a collection of affine functions
\[
\{h_{\mathcal{D}, y}: |\mathcal{D}_y| \rightarrow \mathbb{Q}\}_{\substack{\mathcal{D} \in \mathcal{S} \\ {y \in Y}}}
\]
with rational slope and rational translation such that 
\begin{enumerate}
\item For a fixed $y \in Y$, the functions $\{h_{\mathcal{D}, y}\}_{\mathcal{D} \in \mathcal{S}}$ define a continuous function $h_{y}: |\mathcal{S}_y| \rightarrow \mathbb{Q}$. That is, $h_{\mathcal{D}, y}$ and $h_{\mathcal{D}', y}$ agree on $\mathcal{D}_y \cap \mathcal{D}'_y$ for $\mathcal{D}, \mathcal{D}' \in \mathcal{S}$.
\item For each $\mathcal{D} \in \mathcal{S}$ with complete locus, there exists $u \in M, f \in K(Y)$ and $N \in \mathbb{Z}_{> 0}$ such that $Nh_{\mathcal{D}, y}(v) = -\textnormal{ord}_y(f) - \langle u, v \rangle$ for all $y \in Y$ and all $v \in N_{\mathbb{Q}}$.
\item If $\mathcal{D}_y, \mathcal{D}_{y'}'$ have the same tailcone, then $\textnormal{lin}{h}_{\mathcal{D}, y} = \textrm{lin}{h}_{\mathcal{D}', y'}$. 
\item For a fixed $\mathcal{D}$, $h_{\mathcal{D},y}$ differs from $\textrm{lin}{h}_{\mathcal{D}, y}$ for only finitely many $y$.
\end{enumerate}
A $\mathbb{Q}$-Cartier support function is called a {\it Cartier support function} if each $h_{\mathcal{D}, y}$ has integral slope and integral translation and $N = 1$ in condition (2). We write $CaSF(\mathcal{S})$ and $\mathbb{Q}CaSF(\mathcal{S})$ to denote the abelian group (under standard addition of functions) of Cartier support functions and $\mathbb{Q}$-Cartier support functions respectively.
\end{definition}
For any $T$-invariant Cartier divisor $D$ on $TV(\mathcal{S})$ and any $p$-divisor $\mathcal{D} \in \mathcal{S}$, we can always find an open cover $\{U_i\}$ of $Y$ for which there exists Cartier data for $\restr{D}{TV(\mathcal{D})}$ of the form $(TV(\restr{\mathcal{D}}{U_i}), f_i\chi^{u_i})$ (see proof of \cite{tid}, Prop 3.10 for details). These Cartier data define functions
\[
\{h_{\mathcal{D}, y}(v) = -\textnormal{ord}_y(f_i) - \langle u_i, v\rangle \}_{y \in U_i}
\]
which agree on the overlaps of the $U_i$ to define $h_{\mathcal{D}, y}$ for all $y$. In this way, we can define a Cartier support function for any Cartier divisor on $TV(\mathcal{S})$.

\begin{proposition}\label{prop_suppt_fxn_isomorphism}(\cite{tid}, Prop 3.10) Let $T-CaDiv(\mathcal{S})$ denote the group of $T$-invariant Cartier divisors on $TV(\mathcal{S})$. This association of a Cartier support function to a $T$-invariant Cartier divisor defines an isomorphism of groups
\[
T-CaDiv(\mathcal{S}) \cong CaSF(\mathcal{S})
\]
\end{proposition}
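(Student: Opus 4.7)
The plan is to verify that the assignment $D \mapsto \{h_{\mathcal{D}, y}\}$ described just before the proposition yields a group isomorphism by checking that (a) it is well-defined with image in $CaSF(\mathcal{S})$, (b) it is a group homomorphism, (c) it is injective, and (d) an inverse exists.

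For well-definedness, I would first show independence of the choice of local Cartier data. If $(U_i, f_i\chi^{u_i})$ and $(U_j, f_j\chi^{u_j})$ both represent $\restr{D}{TV(\mathcal{D})}$ on $U_i \cap U_j$, then their ratio is a $T$-invariant unit on the overlap, which must have the form $g\chi^0$ with $g \in \Gamma(U_i \cap U_j, \mathcal{O}_Y^\times)$. This forces $u_i = u_j$ and $\textnormal{ord}_y(f_i) = \textnormal{ord}_y(f_j)$ for $y$ in the overlap, making $h_{\mathcal{D}, y}(v) = -\textnormal{ord}_y(f_i) - \langle u_i, v \rangle$ well-defined. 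I would then verify the four conditions in turn: (1) continuity across different $\mathcal{D} \in \mathcal{S}$ at fixed $y$ follows from the compatibility of $D$ restricted to $TV(\mathcal{D}) \cap TV(\mathcal{D}') = TV(\mathcal{D} \cap \mathcal{D}')$; (2) when $\textnormal{Loc}(\mathcal{D}) = Y$ is complete, the only global units on $Y$ are constants, so the Cartier data glue to a single global $f\chi^u$, giving the stated equation with $N = 1$; (3) the character $u$ appearing in $f_i\chi^{u_i}$ is determined by the $T$-action on the orbit closures, which depends only on the tailcone, forcing the linear parts of $h_{\mathcal{D}, y}$ and $h_{\mathcal{D}', y'}$ to match when the tailcones of $\mathcal{D}_y$ and $\mathcal{D}'_{y'}$ agree; (4) each $f_i$ is a rational function on $Y$, so it has only finitely many zeros and poles, forcing all but finitely many $h_{\mathcal{D}, y}$ to coincide with $\textnormal{lin}\, h_{\mathcal{D}, y}$. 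Additivity is immediate: multiplying sections multiplies $f$'s and adds $u$'s, and $\textnormal{ord}_y$ is additive on products.

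For the inverse map, given $\{h_{\mathcal{D}, y}\} \in CaSF(\mathcal{S})$ I would reconstruct $D$ locally: for each $\mathcal{D}$ with complete locus, condition (2) provides $(f, u)$ and I declare $f\chi^u$ to be global Cartier data on $TV(\mathcal{D})$; for $\mathcal{D}$ with incomplete locus, condition (4) yields an open cover $\{U_i\}$ of $Y$ on each of which I solve for $(f_i, u_i)$ with $h_{\mathcal{D}, y}(v) = -\textnormal{ord}_y(f_i) - \langle u_i, v \rangle$, then I use condition (1) to verify these data glue across different $\mathcal{D}$ in $\mathcal{S}$, and condition (3) to verify consistency along the common tailcones. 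The main obstacle is checking that each candidate $f\chi^u$ actually defines a legitimate Cartier datum, i.e.\ a nonzero rational section of the sheaf $\mathcal{O}(\mathcal{D})$; this requires comparing $\textnormal{ord}_y(f) + \langle u, v\rangle$ with the evaluation divisor $\mathcal{D}(u) = \sum h_{\mathcal{D}_P}(u)\restr{P}{\textnormal{Loc}(\mathcal{D})}$ appearing in the definition of $\mathcal{O}(\mathcal{D})$, which is exactly the content of condition (2). Injectivity then follows because the zero support function forces every local $(f_i, u_i)$ to correspond to a unit, and surjectivity is the construction just given.
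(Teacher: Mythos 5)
The paper does not actually prove this proposition: it is quoted from (\cite{tid}, Prop.~3.10), and the text preceding the statement only describes the map $D \mapsto \{h_{\mathcal{D},y}\}$. Your outline follows the same (essentially the only) route as the cited source, and the overall architecture --- well-definedness, verification of conditions (1)--(4), additivity, explicit inverse --- is the right one. Three of your individual steps, however, would not go through as written. First, your well-definedness argument asserts that a $T$-invariant unit on $TV(\restr{\mathcal{D}}{U_i \cap U_j})$ must have the form $g\chi^0$. This is true only when $\textnormal{tail}(\mathcal{D})$ is full-dimensional, so that $\sigma^\vee$ is pointed; a divisorial fan always contains $p$-divisors with smaller tailcones, and for those a homogeneous unit has the form $g\chi^w$ with $w \in \sigma^\vee \cap (-\sigma^\vee)$. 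One must instead observe that the unit condition on the graded pieces forces $\langle w, \cdot\rangle$ to be constant, equal to $-\textnormal{ord}_y(g)$, on each slice $\mathcal{D}_y$, which is what makes $h_{\mathcal{D},y}$ independent of the choice of Cartier data. The conclusion survives, but your stated reason does not.

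Second, for condition (2) the observation that the only global units on a complete curve are constants does not by itself make the local data $(U_i, f_i\chi^{u_i})$ glue into a single global $f\chi^u$: the transition functions are units on overlaps, not global units, and producing a global principal datum is precisely the nontrivial claim. The correct input is (\cite{tid}, Prop.~3.1): every invariant Cartier divisor on $TV(\mathcal{D})$ with complete locus is principal --- the same fact this paper invokes to show $\deg(D_\sigma)=0$ for marked cones. Third, in the construction of the inverse you identify the ``main obstacle'' as checking that $f\chi^u$ is a legitimate rational section by comparison with $\mathcal{D}(u)$; this is a red herring, since any nonzero $f \in K(Y)$ and any $u \in M$ already give a rational function on $TV(\mathcal{D})$ (the function field is generated by $K(Y)$ and the characters). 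The genuine checks are (a) that on a sufficiently fine affine cover of $Y$ one can find $f_i$ with the finitely many prescribed orders $a_y$, and (b) that the resulting transition data $f_i f_j^{-1}\chi^{u_i - u_j}$ are units on the overlaps, which is where conditions (1) and (3) actually do their work.
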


If $\{h_{\mathcal{D}, y}\}$ is the Cartier support function for $ND$, where $N > 0$ and $D$ is a $T$-invariant $\mathbb{Q}$-Cartier divisor, then $\{N^{-1}h_{\mathcal{D}, y}\}$ is a $\mathbb{Q}$-Cartier support function. In this way, we can associate a $\mathbb{Q}$-Cartier support function to any $T$-invariant $\mathbb{Q}$-Cartier divisor on $TV(\mathcal{S})$. The following is an immediate corollary of Proposition \ref{prop_suppt_fxn_isomorphism}.
\begin{corollary}Let $T-\mathbb{Q}CaDiv(\mathcal{S})$ denote the group of $T$-invariant $\mathbb{Q}$-Cartier divisors on $TV(\mathcal{S})$. Then the association described above is an isomorphism of groups
\[
T-\mathbb{Q}CaDiv(\mathcal{S}) \cong \mathbb{Q}CaSF(\mathcal{S})
\]
\end{corollary}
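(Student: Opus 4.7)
The plan is to reduce to Proposition \ref{prop_suppt_fxn_isomorphism} by a clearing-denominators argument. Define $\Phi: T-\mathbb{Q}CaDiv(\mathcal{S}) \to \mathbb{Q}CaSF(\mathcal{S})$ exactly as in the paragraph preceding the corollary: given a $T$-invariant $\mathbb{Q}$-Cartier divisor $D$, choose any $N > 0$ with $ND$ Cartier, let $h^{ND} \in CaSF(\mathcal{S})$ be its image under Proposition \ref{prop_suppt_fxn_isomorphism}, and set $\Phi(D) := N^{-1} h^{ND}$.

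First I would check that $\Phi$ is well-defined. If $N_1, N_2$ both clear denominators, then applying the integral isomorphism to $N_1 N_2 D$ and using its $\mathbb{Z}$-linearity shows that $N_2 h^{N_1 D} = N_1 h^{N_2 D}$, whence $N_1^{-1} h^{N_1 D} = N_2^{-1} h^{N_2 D}$. That $\Phi$ is a group homomorphism follows by choosing a common denominator for any two given divisors and appealing to $\mathbb{Z}$-linearity of the integral isomorphism once more.

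For injectivity, $\Phi(D) = 0$ with $ND$ Cartier forces $h^{ND} = 0$, hence $ND = 0$ by Proposition \ref{prop_suppt_fxn_isomorphism}, and so $D = 0$ because the ambient group of $\mathbb{Q}$-divisors is torsion-free. For surjectivity, given $g \in \mathbb{Q}CaSF(\mathcal{S})$, I would produce an integer $N > 0$ with $Ng \in CaSF(\mathcal{S})$, let $E$ be the Cartier divisor with $h^E = Ng$ supplied by Proposition \ref{prop_suppt_fxn_isomorphism}, and observe that $\Phi(E/N) = g$ by construction.

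The step requiring the most care is producing this common $N$. For each $\mathcal{D} \in \mathcal{S}$ with complete locus, condition (2) in the definition of $\mathbb{Q}CaSF(\mathcal{S})$ directly supplies an $N_{\mathcal{D}}$ for which $N_{\mathcal{D}} g_{\mathcal{D}, y}$ has the form required by the integral version. For $\mathcal{D}$ with non-complete locus, I would choose $N_{\mathcal{D}}$ so as to clear the denominators of the finitely many distinct slopes and translations appearing among the $g_{\mathcal{D}, y}$, a finite task by condition (4). Since $\mathcal{S}$ itself is finite, $N := \operatorname{lcm}_{\mathcal{D} \in \mathcal{S}} N_{\mathcal{D}}$ exists; by construction $Ng$ inherits conditions (1) and (3) from $g$ and now satisfies (2) and (4) integrally, so $Ng \in CaSF(\mathcal{S})$ and Proposition \ref{prop_suppt_fxn_isomorphism} completes the argument.
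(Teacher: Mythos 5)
Your argument is correct and is exactly the clearing-denominators reduction to Proposition \ref{prop_suppt_fxn_isomorphism} that the paper has in mind; the paper simply declares the corollary ``immediate'' and omits the details you supply. Nothing in your writeup diverges from or adds risk to that route.
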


\subsection{Vertical Curves}\label{section_vertical_curves}
Toward the goal of describing the intersection theory of a $T$-variety, we study its $T$-invariant curves. We start with {\it vertical curves}, which are images (under $p$) of a $T$-invariant curve contained in a single fiber of $\pi$.

Recall from Proposition \ref{prop_irrcpt} that for $y \in Y$, the reduced fiber $\pi^{-1}(y)$ has as its irreducible components a toric variety for each vertex $v$ of $\mathcal{S}_y$. A toric variety has a $T$-invariant curve corresponding to each wall\footnote{A {\it wall} of a fan is a codimension-one cone that can be realized as the intersection of two top-dimensional cones.} of its fan (by taking the closure of the corresponding torus orbit). Translating this fact into the context of toric bouquets, we call a codimension-one element of a polyhedral complex a {\it wall} if it can be realized as the intersection of two top-dimensional polyhedra; there is a $T$-invariant curve in a toric bouquet for each wall of the corresponding polyhedral complex. In this section, we study the curves in $\widetilde{TV}(\mathcal{S})$ and $TV(\mathcal{S})$ corresponding to these $T$-invariant curves.

Fix a $T$-variety $TV(\mathcal{S})$ and a point $y \in Y$. Let $\tau \in \mathcal{S}_y$ be a wall of the polyhedral complex $\mathcal{S}_{y}$, let $\mathcal{D}, \mathcal{D}' \in \mathcal{S}$ be two $p$-divisors for which $\tau = \mathcal{D}_{y} \cap \mathcal{D}_{y}'$, let $\lambda_{\tau, \mathcal{D}} \subseteq M_{\mathbb{Q}}$ be the cone in $\mathcal{N}(\mathcal{D}_y)$ dual to $\tau$, and let $u_{\tau, \mathcal{D}}$ be the semigroup generator of $M_{\lambda_{\tau, \mathcal{D}}}$. As usual, unweildy notation  obfuscates a simple picture: if  $\mathcal{D}$ and $\mathcal{D}'$ have polyhedral coefficients over $y$ as shown in Figure \ref{fig_vertcurve} ($\tau$ is the horizontal plane in a single orthant at a height of $2/3$), then the sublattice $\mathbb{Z} \cdot u_{\tau, \mathcal{D}} = M_{\lambda_{\tau, \mathcal{D}}} \cup M_{\lambda_{\tau, \mathcal{D}'}}$ consists of the bold elements of the vertical axis of  $M \cong \mathbb{Z}^3$ shown on the right.

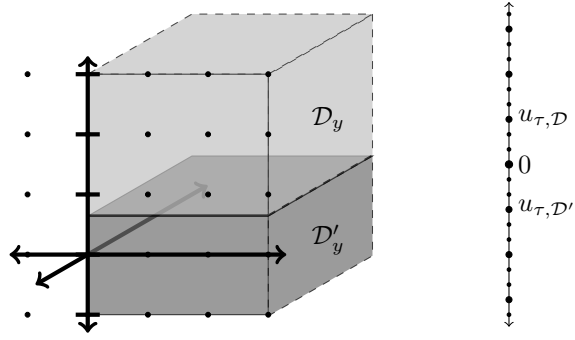
\begin{figure}[!ht]
\centering
\begin{tikzpicture}[scale = 0.8]
\draw[ultra thick, ->] (0, 0) -- ++(30:2.3);

\fill[fill=gray4, draw=black, opacity = 0.7] (0,0.64) -- (3, 0.64) -- (3, -1) --(0, -1) --(0,0.64);
\fill[fill=gray4, draw=black, dashed, opacity = 0.7] (3, 0.64) -- ++(30:2) -- ++(-90: 1.66) -- ++(210:2) -- (3, 0.64);
\fill[fill=gray2, draw=black, opacity = 0.7] (0,0.64) -- (3, 0.64)  -- ++(30:2)  -- ++(180:3)  --(0,0.64); 

\fill[fill=gray8, draw=black, dashed, opacity = 0.7] (3, 0.66) -- ++(30:2) -- ++(90:2.33) -- ++(210:2) -- (3, 0.66);
\fill[fill=gray8, draw=black, dashed, opacity = 0.7] (3, 3) -- ++(30:2) -- ++(180:3) -- ++(210:2) -- (3, 3);
\fill[fill=gray8, draw=black, opacity = 0.7] (0,0.66) -- (3, 0.66) -- (3, 3) --(0, 3) --(0,0.66);

\draw[ultra thick, <->] (0, -1.3) -- (0, 3.3);
\foreach \i in {-1, 0, 1, 2, 3}
	{ \draw[ultra thick] (-0.2, \i) -- (0.2, \i);};

\draw[ultra thick, <->] (-1.3, 0) -- (3.3, 0);
\draw[ultra thick, ->] (0, 0) -- ++(210:1);

\foreach \x in {-1, ..., 3} { \foreach \y in {-1, ..., 3} {
	\draw[fill = black] (\x , \y ) circle (0.4mm);
	}} 

\draw[shift = {(7, -1)}, <->] (0, -0.2) -- (0, 5.2);

\draw (4, 2.25) node[] {$\mathcal{D}_y$};
\draw (4, 0.25) node[] {$\mathcal{D}'_y$};

\foreach \y in {0, ..., 20}
	{
	\draw[shift = {(7, -1)}, fill = black] (0, \y / 4) circle (.3mm);
	} 
	\draw[shift = {(7, -1)}, fill = black] (0, 4.75) circle (.5mm);
	\draw[shift = {(7, -1)}, fill = black] (0, 4) circle (.5mm);
	\draw[shift = {(7, -1)}, fill = black] (0, 3.25) circle (.5mm) node[right] {$u_{\tau, \mathcal{D}}$};
	\draw[shift = {(7, -1)}, fill = black] (0, 2.5) circle (.6mm) node[right] {$0$};
	\draw[shift = {(7, -1)}, fill = black] (0, 1.75) circle (.5mm) node[right] {$u_{\tau, \mathcal{D}'}$};;
	\draw[shift = {(7, -1)}, fill = black] (0, 1) circle (.5mm);
	\draw[shift = {(7, -1)}, fill = black] (0, .25) circle (.5mm);
	
\end{tikzpicture}
\caption{The sublattice $\mathbb{Z} \cdot u_{\tau, \mathcal{D}}$ corresponding to the wall $\tau = \mathcal{D}_y \cap \mathcal{D}'_y$}
\label{fig_vertcurve}
\end{figure}

If $g \in K(Y)$ is Cartier data for $\mathcal{D}(u_{\tau, \mathcal{D}})$ in some neighborhood of $y$, the maps 
\begin{align*}
\Gamma(\textnormal{Loc}(\mathcal{D}), \mathcal{O}(\mathcal{D})) &\rightarrow \mathbb{C}[z]\\
f\chi^u &\mapsto \left\{ \begin{array}{c c} 0 & u \notin M_{\lambda_{\tau}, \mathcal{D}}\\ (g^kf)(y)z^k & u = ku_{\tau, \mathcal{D}} \end{array} \right.
\end{align*}
\begin{align*}
\Gamma(\textnormal{Loc}(\mathcal{D}'), \mathcal{O}(\mathcal{D}')) &\rightarrow \mathbb{C}[z^{-1}]\\
f\chi^u &\mapsto \left\{ \begin{array}{c c} 0 & u \notin M_{\lambda_{\tau}, \mathcal{D}'}\\ (g^{-k}f)(y)z^{-k} & u = -ku_{\tau, \mathcal{D}} \end{array} \right.
\end{align*}
glue together to induce a map
\begin{equation}\label{eqn_vertparam}
\mathbb{P}^1 \rightarrow {TV}(\mathcal{S})
\end{equation}
the image of which we will call the {\it vertical curve} $C_{\tau, y}$. 
\begin{proposition}\label{prop_vc}
The vertical curve $C_{\tau, y}$ is the image under $p$ of the closure of the torus orbit in $TB(\mathcal{S}_y) \subseteq \widetilde{TV}(\mathcal{S})$ corresponding to the wall $\tau$.
\end{proposition}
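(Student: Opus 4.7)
The plan is to establish a two-step factorization: first, lift the map $\mathbb{P}^1 \to TV(\mathcal{S})$ of (\ref{eqn_vertparam}) to a morphism $\phi: \mathbb{P}^1 \to \widetilde{TV}(\mathcal{S})$; second, show that the image of $\phi$ equals the closure $\overline{O}_\tau$ of the $T$-orbit in $TB(\mathcal{S}_y) \subseteq \widetilde{TV}(\mathcal{S})$ corresponding to the wall $\tau$. Since the composition $p \circ \phi$ is by construction the map (\ref{eqn_vertparam}) whose image is $C_{\tau, y}$, combining the two steps yields $C_{\tau, y} = p(\overline{O}_\tau)$.

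For the lift, note that each of the two ring homomorphisms targeting $\mathbb{C}[z]$ and $\mathbb{C}[z^{-1}]$ factors through restriction of global sections to $\pi^{-1}(U)$ for a neighborhood $U$ of $y$ on which the Cartier datum $g$ exists. Because $\widetilde{TV}(\mathcal{D}) = \textnormal{Spec}_{\textnormal{Loc}(\mathcal{D})}\mathcal{O}(\mathcal{D})$, the universal property of the relative spectrum lifts these maps to morphisms $\textnormal{Spec}\,\mathbb{C}[z] \to \widetilde{TV}(\mathcal{D})$ and $\textnormal{Spec}\,\mathbb{C}[z^{-1}] \to \widetilde{TV}(\mathcal{D}')$. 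On the overlap $\textnormal{Spec}\,\mathbb{C}[z,z^{-1}]$, both morphisms are computed from sections over $\mathcal{D} \cap \mathcal{D}'$, so they glue via the face relations of the divisorial fan into the desired $\phi$.

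For the image identification, I would work on $\textnormal{Spec}\,\mathbb{C}[z] \to \widetilde{TV}(\mathcal{D})$. Every $f\chi^u$ with $u \notin M_{\lambda_\tau, \mathcal{D}}$ maps to zero, and every $g^k f$ is evaluated at $y$, so the image lies both in the closed subscheme cut out by these sections and in $\pi^{-1}(y) = TB(\mathcal{S}_y)$ by Proposition \ref{prop_tbp}. Decomposing $TB(\mathcal{S}_y)$ into its toric components $TV(F_v)$ via Proposition \ref{prop_irrcpt}, the standard orbit-cone correspondence on each component (for $v$ a vertex of $\mathcal{D}_y$ lying on $\tau$) identifies this vanishing locus with the closure of the orbit attached to the codimension-one cone $i_v(\mathbb{Q}_{\geq 0}(\tau - v)) \in F_v$. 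Gluing these local pieces over $v$ produces the affine open of $\overline{O}_\tau$ in $\widetilde{TV}(\mathcal{D})$; the parallel computation on $\textnormal{Spec}\,\mathbb{C}[z^{-1}]$ recovers the complementary affine open in $\widetilde{TV}(\mathcal{D}')$, and since $u_{\tau, \mathcal{D}}$ is the semigroup generator of $M_{\lambda_\tau, \mathcal{D}}$, $\phi$ is dominant on each chart. The main obstacle is verifying that the ideal $(f\chi^u : u \notin M_{\lambda_\tau, \mathcal{D}})$ together with the ideal of $\pi^{-1}(y)$ cuts out exactly this affine portion of $\overline{O}_\tau$ (not a larger or smaller subscheme) and that the identification glues consistently across different vertices of $\tau$; this reduces to the integrality condition $h_{\mathcal{D}_y}(u_{\tau, \mathcal{D}}) \in \mathbb{Z}$ built into the definition of $S_{\mathcal{D}_y}$.
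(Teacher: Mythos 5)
Your skeleton is the same as the paper's: factor the map (\ref{eqn_vertparam}) through $\pi^{-1}(U)\subseteq\widetilde{TV}(\mathcal{S})$ for an affine neighborhood $U$ of $y$, and reduce to identifying the image of the lifted map inside the fiber over $y$. The lifting step is unproblematic and matches the paper's factorization (\ref{eqn_factored}). The issue is the second step, and it is exactly the one you flag as ``the main obstacle'': you never actually verify that the curve you have parametrized is the orbit closure of $\tau$ rather than some other $T$-invariant curve in the fiber, and your proposed reduction of this verification to the integrality condition $h_{\mathcal{D}_y}(u_{\tau,\mathcal{D}})\in\mathbb{Z}$ does not close it. That condition only guarantees $u_{\tau,\mathcal{D}}\in S_{\mathcal{D}_y}$, i.e.\ that the relevant monomials survive in the dappled fan ring; it does not identify a subscheme.

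The missing ingredient is the explicit, ring-level form of the isomorphism $\pi^{-1}(y)_{\mathrm{red}}\cong TB(\mathcal{S}_y)$ of Proposition \ref{prop_tbp}. The orbit closure of $\tau$ is defined inside $\mathbb{C}[\mathcal{N}(\mathcal{D}_y),S_{\mathcal{D}_y}]$ (it is cut out by the monomials $\chi^u$ with $u\notin M_{\lambda_{\tau},\mathcal{D}}$ and parametrized by $\chi^{ku_{\tau,\mathcal{D}}}\mapsto z^k$), while your candidate ideal lives in $\Gamma(U,\mathcal{O}(\mathcal{D}))$; to compare them you must write down the map between these rings. Following \cite{poly_divisors} this requires choosing, after shrinking $U$, trivializations $g_{\mathcal{D}(u)}$ with $\restr{\textnormal{div}(g_{\mathcal{D}(u)})}{U}=\restr{\mathcal{D}(u)}{U}$ and $g_{\mathcal{D}(u+u')}=g_{\mathcal{D}(u)}g_{\mathcal{D}(u')}$, whereupon the isomorphism is $f\chi^u\mapsto (g_{\mathcal{D}(u)}f)(y)\chi^u$ as in (\ref{eqn_efgh}). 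Only with this in hand can you see that the evaluation $(g^kf)(y)z^k$ appearing in the definition of $C_{\tau,y}$ is precisely the composite of the fiber isomorphism with the standard orbit-closure parametrization (\ref{eqn_ijkl}) --- which is the whole content of the proof, and gives equality of the image with the orbit closure directly, without any separate dominance or dimension argument. As written, your argument asserts the conclusion at the point where the computation is needed.
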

\begin{proof}
For any affine open $U \subseteq Y$ containing $y$, Map \ref{eqn_vertparam} factors
\begin{equation}\label{eqn_factored}
\mathbb{P}^1 \rightarrow \pi^{-1}(U) \subseteq \widetilde{TV}(\mathcal{S}) \overset{p}{\rightarrow} TV(\mathcal{S})
\end{equation}

where $\mathbb{P}^1 \rightarrow \pi^{-1}(U)$ is given by
\begin{align}\label{eqn_abcd}
\Gamma(U, \mathcal{O}(\mathcal{D})) &\rightarrow \mathbb{C}[z]\\
f\chi^u &\mapsto \left\{ \begin{array}{c c} 0 & u \notin M_{\lambda_{\tau}, \mathcal{D}}\\ (g^kf)(y)z^k & u = ku_{\tau, \mathcal{D}} \end{array} \right.\notag
\end{align}
\begin{align*}
\Gamma(U, \mathcal{O}(\mathcal{D}')) &\rightarrow \mathbb{C}[z^{-1}]\\
f\chi^u &\mapsto \left\{ \begin{array}{c c} 0 & u \notin M_{\lambda_{\tau}, \mathcal{D}'}\\ (g^{-k}f)(y)z^{-k} & u = -ku_{\tau, \mathcal{D}} \end{array} \right.
\end{align*}
Therefore, it suffices to show that the image of $\mathbb{P}^1 \rightarrow \pi^{-1}(U)$ is the closure of the torus orbit in $TB(\mathcal{S}_y) \subseteq \widetilde{TV}(\mathcal{S})$ corresponding to the wall $\tau$. To do so, we recall some relevant details about the isomorphism between the reduced fibers of $\pi$ and a dappled toric bouquet (see \cite{poly_divisors}, Proposition 7.10 for details). This isomorphism is constructed by first choosing a collection of functions $\{g_{\mathcal{D}(u)} \in K(Y)\}_{u \in S_{\Delta}}$ such that, after possibly shrinking $U$,
\[
\restr{\textnormal{div}(g_{\mathcal{D}(u)})}{U} = \restr{\mathcal{D}(u)}{U} \hspace{1cm} \textnormal{and} \hspace{1cm}  g_{\mathcal{D}(u + u')} = g_{\mathcal{D}(u)}g_{\mathcal{D}(u')}
\]
Then the isomorphism between the fiber and the dappled toric bouquet is induced by
\begin{align} \label{eqn_efgh}
\Gamma(U, \mathcal{O}(\mathcal{D})) &\rightarrow \mathbb{C}[\mathcal{N}(\mathcal{D}_y), S_{\mathcal{D}_y}]\\
f\chi^u &\mapsto \left\{\begin{array}{c c}(g_{\mathcal{D}(u)}f)(y)\chi^u &\textnormal{ if $u \in S_{\mathcal{D}_y}$}\\ 0 &\textnormal{ otherwise} \end{array} \right. \notag
\end{align}
(and similarly for $\mathcal{D}'$). On the other hand, the closure of the torus orbit corresponding to $\tau$ in the toric bouquet is parametrized by gluing the maps
\begin{align}\label{eqn_ijkl}
\mathbb{C}[\mathcal{N}(\mathcal{D}_y), S_{\mathcal{D}_y}] &\rightarrow \mathbb{C}[z]\\\notag
\chi^u &\mapsto \left\{\begin{array}{c c}z^k &\textnormal{ if $u = ku_{\tau, \mathcal{D}}, k \in \mathbb{Z}$}\\ 0 &\textnormal{ otherwise} \end{array} \right.\\\notag
\mathbb{C}[\mathcal{N}(\mathcal{D}_y'), S_{\mathcal{D}_y'}] &\rightarrow \mathbb{C}[z^{-1}]\\\notag
\chi^u &\mapsto \left\{\begin{array}{c c}z^{-k} &\textnormal{ if $u = -ku_{\tau, \mathcal{D}}, k \in \mathbb{Z}$}\\ 0 &\textnormal{ otherwise} \end{array} \right.
\end{align}
The composition of Equation \ref{eqn_efgh} and Equation \ref{eqn_ijkl} yields Equation \ref{eqn_abcd}, proving the claim.
\end{proof}
To find a formula that calculates the intersection between $C_{\tau, y}$ and a $T$-invariant Cartier divisor $D$, we pick Cartier data for $D$ that includes two sets of the form
\[
\{(TV(\restr{\mathcal{D}}{U}), f\chi^u), (TV(\restr{\mathcal{D}'}{U'}), f' \chi^{u'})\}
\]
where $U, U' \subseteq Y$ are open sets containing $y$. The Cartier support function for $D$ includes
\[
h_{\mathcal{D}, y} = -\textnormal{ord}_{y}(f) - \langle u, v\rangle \hspace{1cm} \textrm{and} \hspace{1cm} h_{\mathcal{D}', y} = -\textnormal{ord}_{y}(f') - \langle u', v\rangle.
\]
Because $h_{\mathcal{D}, y}$ and $h_{\mathcal{D}', y}$ agree on $\tau$, it must be the case that
\[
\textnormal{ord}_y(f) - \textnormal{ord}_y(f') + \langle u - u', v\rangle = 0
\]
for all $v \in \tau$. In particular, $u - u' \in \mathbb{Q} \cdot u_{\tau, \mathcal{D}}$. Moreover, since $ \langle u - u', v\rangle = \textnormal{ord}_{y}(f') - \textnormal{ord}_{y}(f) \in \mathbb{Z}$, it must be the case that $\langle u - u', v\rangle \in \mathbb{Z}$ for $v \in \tau$. Therefore, $u - u' = ku_{\tau, \mathcal{D}}$ for some $k \in \mathbb{Z}$, and the quotient of the two Cartier data is $f\chi^u / f'\chi^{u'} = (f/f') \chi^{ku_{\tau, \mathcal{D}}}$. Under the parametrization of $C_{\tau, y}$ in Equation \ref{eqn_vertparam}, this rational function pulls back to $(g^kf/f')(y) z^k$ on $C_{\tau, y} \cong \mathbb{P}^1$, where $g$ is Cartier data for $\mathcal{D}(u_{\tau, \mathcal{D}})$. Therefore, the degree of the pullback of $D$ onto $C_{\tau, y}$ is $k$. This is precisely $\mu_{\tau}^{-1}\langle u - u', n_{\tau, \mathcal{D}} \rangle$, where $\mu_{\tau}$ is the index of $\mathbb{Z} \cdot u_{\tau, \mathcal{D}}$ in $\mathbb{Q} \cdot u_{\tau, \mathcal{D}} \cap M$ and $n_{\tau, \mathcal{D}} \in N$ is any representative of the generator of $N / (u_{\tau, \mathcal{D}})^{\perp}$ that pairs positively with $u_{\tau, \mathcal{D}}$ (equivalently, $n_{\tau, \mathcal{D}}$ is any element of $N$ such that $\langle n_{\tau, \mathcal{D}}, u_{\tau, \mathcal{D}}\rangle = \mu_{\tau}$).
\[
\langle D, C_{\tau, y} \rangle = \mu_{\tau}^{-1}\langle u - u', n_{\tau, \mathcal{D}} \rangle
\]
or, using the language of Cartier support functions, 
\begin{equation}\label{equation_vertintersect}
\langle D, C_{\tau, y} \rangle = \mu_{\tau}^{-1}(\textrm{lin}h_{\mathcal{D}', y} - \textrm{lin}h_{\mathcal{D}, y})(n_{\tau, \mathcal{D}})
\end{equation}
By linearity, the same formula applies when $D$ is a $T$-invariant $\mathbb{Q}$-Cartier divisor.

\begin{example} Let $\mathcal{D}, \mathcal{D}' \in \mathcal{S}$ be $p$-divisors that have the slices shown in Figure \ref{fig_vertcurve}. Suppose that with respect to the standard basis given by the coordinate axes in the picture, a $T$-invariant divisor $D$ has the following Cartier support functions
\[
h_{\mathcal{D}, y}(v) = -10 + \langle (9, 4, 17), v\rangle \hspace{0.5cm} \textnormal{and} \hspace{0.5cm} h_{\mathcal{D}', y}(v) = 0 + \langle (9, 4, 2), v\rangle
\]
Then 
\[
\langle D, C_{\tau, y} \rangle = 3^{-1}\langle(0, 0, -15), (0, 0, 1)\rangle = -5
\]
\end{example}

\subsection{Horizontal Curves} Let $\sigma$ be a full-dimensional cone of $\textnormal{tail}(\mathcal{S})$. Because the $T$-varieties we study are complete, every $\mathcal{S}_y$ contains a polyhedron with tailcone $\sigma$. Such a polyhedron corresponds to a fixed point in the fiber $\pi^{-1}(y)$. Taking the union (as $y$ varies) of these fixed points defines a curve $\widetilde{C}_{\sigma} \subseteq \widetilde{TV}(\mathcal{S})$. Theorem \ref{thm_contraction} shows that $p$ contracts $\widetilde{C}_{\sigma}$ precisely if there is some $\mathcal{D} \in \mathcal{S}$ with tailcone $\sigma$ and complete locus. In this case, we say that $\sigma$ is {\it marked}.
\begin{definition}
A cone $\sigma$ of $\textnormal{tail}(\mathcal{S})$ is {\it marked} if $\sigma$ is the tailcone of a $p$-divisor $\mathcal{D} \in \mathcal{S}$ with complete locus.
\end{definition}
When $\sigma$ is unmarked, Theorem \ref{thm_contraction} shows that no distinct points of $\widetilde{C}_{\sigma}$ are identified by $p$. Toward the goal of finding an intersection formula for these {\it horizontal curves} $C_{\sigma} := p(\widetilde{C}_{\sigma})$, we parametrize them. Let $TV(\mathcal{S})$ be a $T$-variety and let $\sigma$ be an unmarked full-dimensional cone of $\textrm{tail}(\mathcal{S})$. For $\mathcal{D} \in \mathcal{S}$ with tailcone $\sigma$, we have a map of rings
\begin{align} \label{map_hcurve}
\varphi_{\mathcal{D}}: \Gamma(TV(\mathcal{D}), \mathcal{O}(\mathcal{D})) &\rightarrow \Gamma(\textnormal{Loc}(\mathcal{D}), \mathcal{O}_{Y})\\
f\chi^u &\mapsto \left\{\begin{array}{c c} f & \textnormal{if} \ u = 0\\ 0 & \textnormal{otherwise} \end{array}\right. \notag
\end{align}
Because each $\{\textrm{Loc}(\mathcal{D}) \mid \textnormal{tail}(\mathcal{D}) = \sigma\}$ is affine, these glue into a map
\[
s_{\sigma}: Y \hookrightarrow TV(\mathcal{S})
\]
where we used the fact that $\mathcal{S}$ is complete (so $|\mathcal{S}_y| = N_{\mathbb{Q}}$ for all $y$) to deduce that $Y$ is covered by $\{\textnormal{Loc}(\mathcal{D}) \mid \textnormal{tail}(\mathcal{D}) = \sigma\}$. The map $s_{\sigma}$ factors through $\widetilde{TV}(\mathcal{S})$. By carefully following the isomorphism between the fibers of $\pi$ and the corresponding toric bouquets (as in the proof of Proposition \ref{prop_vc}), we see that the image of $s_{\sigma}$ indeed equals the horizontal curve $p(\widetilde{C}_{\sigma})$. 

We can use this parametrization to find an intersection formula for $T$-invariant divisors and horizontal curves. Fix a cone $\sigma$ of $\textnormal{tail}(\mathcal{S})$ of full dimension and a $T$-invariant Cartier divisor $D$ with Cartier support function $\{h_{\mathcal{D}, y}\}$. Because $\sigma$ has full dimension, there is a unique $u_{\sigma} \in M$ and collection of integers $\{a_{y} \in \mathbb{Z}\}_{y \in Y}$ such that for each $\mathcal{D}$ with tailcone $\sigma$ and each $y \in \textrm{Loc}(\mathcal{D})$,
\[
h_{y, \mathcal{D}}(v) = -a_y - \langle u_{\sigma}, v\rangle.
\]
We can find Cartier data for $D$ whose open sets and rational functions are of the form
\[
(TV(\restr{\mathcal{D}}{U}), f_{\mathcal{D}, U} \chi^{u_{\sigma}})
\]
for open sets $U \subseteq Y$. Then $\textnormal{ord}_y(f_{\mathcal{D}, U}) = a_y$ for all $\mathcal{D}$ with tailcone $\sigma$ and $y \in U$. When $\sigma$ is unmarked, the open sets $U$ appearing in the Carter data are affine, and the pullback of the transition function $f_{\mathcal{D}, U}f^{-1}_{\mathcal{D}', U'}\chi^0$ onto the curve $C_{\sigma} \cong Y$ is the function $f_{\mathcal{D}, U}f^{-1}_{\mathcal{D}', U'}$ on $U \cap U'$. That is, the functions $f_{\mathcal{D}, U}$ appearing in the Cartier data for $D$ are themselves the Cartier data for the pullback of $D$ onto $C_{\sigma} \cong Y$. As a Weil divisor, the pullback of $D$ onto $C_{\sigma}$ is $\sum a_y[y]$; we call this divisor $D_{\sigma}$.

\begin{definition}
Given a $\mathbb{Q}$-Cartier support function $\{h_{\mathcal{D}, y}\}$, a cone $\sigma$ of full dimension in $\textnormal{tail}(\mathcal{S})$, and a point $y$, there is a unique $a_y \in \mathbb{Z}$ such that for every $\mathcal{D}$ with tailcone $\sigma$ and $\textnormal{Loc}(\mathcal{D}) \ni y$,
\[
h_{\mathcal{D}, y} = -a_y - \textnormal{lin}(h_{\mathcal{D}, y}).
\]
Then define
\[
D_{\sigma} = \sum_{y \in Y} a_y[y]
\]
\begin{remark}\label{remark_horizdsig}
The definition of $D_{\sigma}$ makes sense even when $\sigma$ is marked. However, if $\mathcal{D}$ has complete locus, then by (\cite{tid}, Proposition 3.1) every invariant Cartier divisor on $TV(\mathcal{D})$ is principal. It follows that $\textnormal{deg}(D_{\sigma}) = 0$ for every marked $\sigma$.
\end{remark}
\end{definition}
\begin{remark} Compare this definition to (\cite{tid}, Definition 3.26). In our notation, $D_\sigma = -\restr{h}{\sigma}(0)$.
\end{remark}

With this new definition, we can summarize the discussion above with the following equation for the intersection theory of a T-invariant divisor with a horizontal curve.
\[
\langle D, C_{\sigma} \rangle = \textnormal{deg}(D_{\sigma})
\]
By linearity, the same formula applies when $D$ is a $T$-invariant $\mathbb{Q}$-Cartier divisor.

\section{The $T$ Cone Theorem}\label{sec_tct}
Given a normal variety $X$, let $Z_1(X)$ be the proper 1-cycles, and define
\[
N^1(X) := (\textnormal{CaDiv}(X)/\sim) \otimes_{\mathbb{Z}} \mathbb{R} \hspace{1cm} N_1(X) := (Z_1(X)/\sim) \otimes_{\mathbb{Z}} \mathbb{R}
\]
where $\sim$ denotes numerical equivalence of divisors in the first definition, and numerical equivalence of curves in the second. The vector space $N^1(X)$ contains the cone $\textnormal{Nef}(X)$ generated by classes of nef divisors, and the vector space $N_1(X)$ contains the cone $NE(X)$ generated by classes of irreducible complete curves. The {\it Mori cone} $\overline{NE}(X)$ is the closure of $NE(X)$. With respect to the intersection product, $N_1(X)$ and $N^1(X)$ are dual vector spaces, and the cones $\textnormal{Nef}(X), \overline{NE}(X)$ are dual cones.

When $X$ is the toric variety of a fan $\Sigma$, the closure of the torus orbit corresponding to a wall of $\Sigma$ defines an element of $\overline{NE}(X)$. The celebrated toric cone theorem (\cite{cox}, Theorem 6.3.20(b)) states that $\overline{NE}(X)$ is generated as a cone by these classes. In this section, we prove the corresponding result for $T$-varieties. We continue to assume that all $T$-varieties are complete complexity-one $T$-varieties over a projective curve $Y$.
\begin{theorem}\label{tct} Let $TV(\mathcal{S})$ be an $n$-dimensional $T$-variety, and let $y' \in Y$ be any point for which $\mathcal{S}_{y'} = \textnormal{tail}(\mathcal{S})$. Then 
\begin{equation}\label{tct_sum}
\overline{NE}(TV(\mathcal{S})) = \sum_{\substack{y \in Y\\ \tau \ \textnormal{a wall} \textnormal{ of } \mathcal{S}_y \\ \textnormal{dim}(\textnormal{tail}(\tau)) < n-1}} \mathbb{R}_{\geq 0} [C_{\tau, y}]  + \sum_{\substack{\tau \ \textnormal{a wall} \\ \textnormal{of } \textnormal{tail}(\mathcal{S})}} \mathbb{R}_{\geq 0} [C_{\tau, y'}] + \sum_{\substack{\sigma \in \textnormal{tail}(\mathcal{S}) \\ \textnormal{dim}(\sigma) = n-1 \\ \sigma \textnormal{ unmarked}}} \mathbb{R}_{\geq 0} [C_{\sigma}]
\end{equation}
\end{theorem}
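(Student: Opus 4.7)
The approach is a duality argument via Kleiman's criterion. Let $\mathcal{C}$ denote the cone on the right-hand side of Equation \ref{tct_sum}. The inclusion $\mathcal{C} \subseteq \overline{NE}(TV(\mathcal{S}))$ is immediate since every generator is the class of an actual complete curve in $TV(\mathcal{S})$. To establish the reverse inclusion, it suffices to prove the dual containment $\mathcal{C}^\vee \subseteq \textnormal{Nef}(TV(\mathcal{S}))$: given a divisor class pairing non-negatively with every generator of $\mathcal{C}$, we must show it is nef.

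First I would reduce to $T$-invariant $\mathbb{Q}$-Cartier divisors. Because $T$ is connected, translation by $T$ preserves numerical classes, and a standard averaging argument over a maximal compact torus produces a $T$-invariant representative of any $\mathbb{Q}$-Cartier class. So we may assume our divisor $D$ has $\mathbb{Q}$-Cartier support function $\{h_{\mathcal{D},y}\}$. I would then translate the hypothesis $D \cdot C \geq 0$ combinatorially using the intersection formulas of Section \ref{sec_curves}. By Equation \ref{equation_vertintersect}, non-negative intersection with a vertical generator $C_{\tau, y}$ is equivalent to concavity of $h_y$ across the wall $\tau$. Walls $\tau \in \mathcal{S}_y$ with $\dim\textnormal{tail}(\tau) < n-1$ appear as generators of $\mathcal{C}$ directly, while walls with $\dim\textnormal{tail}(\tau) = n-1$ are numerically equivalent to the single generic representative $C_{\tau, y'}$ because they sweep out an algebraic family as $y$ ranges over the dense open set $\{y \in Y : \mathcal{S}_y = \textnormal{tail}(\mathcal{S})\}$. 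The hypothesis therefore forces $h_y$ to be concave across every wall of $\mathcal{S}_y$, hence globally concave on $|\mathcal{S}_y| = N_\mathbb{Q}$. For horizontal generators we have $D \cdot C_\sigma = \deg(D_\sigma)$, so the hypothesis yields $\deg(D_\sigma) \geq 0$ for every unmarked full-dimensional $\sigma \in \textnormal{tail}(\mathcal{S})$, while for marked $\sigma$ one has $\deg(D_\sigma) = 0$ automatically by Remark \ref{remark_horizdsig}.

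Finally, I would deduce nefness of $D$ from these combinatorial conditions. Concavity of $h_y$ implies that the restriction of $D$ to each fiber $\pi^{-1}(y) \cong TB(\mathcal{S}_y)$ is nef: the support function restricts to a concave function on the fan of each irreducible toric component provided by Proposition \ref{prop_irrcpt}, and the classical toric cone theorem applied component-wise gives the claim. This handles any irreducible complete curve contained in a fiber. For an irreducible curve $C \subseteq TV(\mathcal{S})$ that dominates $Y$, the strategy is to degenerate $C$ under a generic one-parameter subgroup of $T$ to a $T$-invariant $1$-cycle that, via Bia\l ynicki-Birula style limits, decomposes as an effective sum of horizontal curves $C_\sigma$ and vertical curves $C_{\tau, y}$; the degree condition on unmarked $\sigma$ then controls the horizontal contribution.

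The main obstacle is making this degeneration rigorous and verifying that the limit cycle is a non-negative combination of the specific generators of $\mathcal{C}$ rather than some other $T$-invariant curves one has not accounted for. I would handle this by working upstairs on $\widetilde{TV}(\mathcal{S})$, where the fibers of $\pi$ are toric bouquets with well-understood $T$-invariant cycles, then pushing forward along $p$; Theorem \ref{thm_contraction} controls which orbit closures get contracted, and the combinatorics of $\mathcal{S}$ should let one identify the limit cycle componentwise in terms of the $\widetilde{C}_\sigma$ and the closures of torus orbits corresponding to walls of $\mathcal{S}_y$.
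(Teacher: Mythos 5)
Your overall skeleton is the paper's: both arguments run by duality, reducing the theorem to the claim that a divisor pairing nonnegatively with the listed generators is nef, and both translate that pairing condition into concavity of each $h_y$ via Equation \ref{equation_vertintersect} and into $\deg(D_\sigma)\geq 0$ for unmarked maximal $\sigma$ (with marked $\sigma$ handled by Remark \ref{remark_horizdsig}). Two points of divergence. First, a minor one: for a wall $\tau'$ of a special slice $\mathcal{S}_y$ whose tailcone is a wall of $\mathrm{tail}(\mathcal{S})$, the curve $C_{\tau',y}$ is not a member of the algebraic family swept out over the dense open set where $\mathcal{S}_y = \mathrm{tail}(\mathcal{S})$, and its class is in general only a positive multiple $\mu_{\tau'}^{-1}\mu_{\tau}$ of the generic class, not equal to it; the paper establishes this proportionality (which is all the cone statement needs) in Proposition \ref{prop_linequiv} by computing intersections against all $T$-invariant Cartier divisors, and your "sweeps out a family" justification does not cover these special-fiber walls.

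Second, and this is the genuine gap: your last two paragraphs set out to prove, from scratch, that concavity of all $h_y$ together with $\deg(D_\sigma)\geq 0$ implies $D$ is nef, via a fiberwise toric argument plus a degeneration of curves dominating $Y$ to $T$-invariant $1$-cycles. You acknowledge yourself that making this degeneration rigorous and identifying the limit cycle is the main obstacle, and as written it is not done: controlling the limit of a dominating curve under a one-parameter subgroup, and verifying the limit decomposes as an effective combination of the specific generators (especially after pushing forward along the contraction $p$), is precisely the hard content, and no argument is supplied. The paper avoids this entirely by invoking Proposition \ref{prop_nefconcave} (\cite{tid}, Corollary 3.29), which states exactly the implication you are trying to reprove; combined with the combinatorial Lemma \ref{lemma_comb} (local concavity across every wall implies global concavity of $h_y$), this closes the argument in one line. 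Your proof becomes complete if you replace the degeneration sketch with a citation of that proposition.
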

For the proof, we review two important facts about divisors on $T$-varieties. 
\begin{proposition} Any Cartier divisor $D$ on a $T$-variety $TV(\mathcal{S})$ is linearly equivalent to a $T$-invariant Cartier divisor.
\end{proposition}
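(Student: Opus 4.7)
The plan is to argue in two steps: (i) show that the line bundle $L := \mathcal{O}(D)$ admits a $T$-linearization, and (ii) use the linearization to produce $T$-semi-invariant Cartier data representing a linearly equivalent $T$-invariant divisor.

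For step (i), observe that the morphism $T \to \textnormal{Pic}(TV(\mathcal{S}))$ sending $t \mapsto [t^*L] - [L]$ is a map from a connected variety into a (countable) discrete abelian group, hence is identically zero. Thus the class $[L]$ is fixed by the $T$-action on $\textnormal{Pic}$, and a classical theorem of Mumford (specialized to torus actions on normal varieties, where the character-group obstruction vanishes) produces a genuine $T$-linearization of $L$.

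For step (ii), invoke Sumihiro's theorem to obtain a finite cover of $TV(\mathcal{S})$ by $T$-invariant affine open subsets $\{U_i\}$, refined so that $L|_{U_i}$ is trivial. A $T$-linearization endows $\Gamma(U_i, L)$ with an $M$-grading compatible with the $M$-grading on $\Gamma(U_i, \mathcal{O})$; choose a nonvanishing generating section $s_i \in \Gamma(U_i, L)$ of a single weight $u_i \in M$. Writing $s_i = g_i \cdot s$ for a fixed rational trivialization $s$ of $L$ on a dense open, the $g_i \in K(TV(\mathcal{S}))^*$ constitute Cartier data for a divisor $D'$ on $TV(\mathcal{S})$. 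The transition functions $g_i/g_j$ are $T$-semi-invariant units, so $D'$ is $T$-invariant, and $D' \sim D$ since both are Cartier data for the same line bundle $L$.

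The main obstacle is step (i): producing the $T$-linearization on the nose rather than after passing to a tensor power. Strictly speaking, Mumford's argument gives a linearization of some $L^{\otimes k}$; passing from that to $L$ itself relies on the vanishing of $H^1(T, \mathcal{O}_X^*)$-type obstructions, which hold for tori acting on normal varieties but deserve care. Once a linearization is in hand, the weight-decomposition in step (ii) is routine.
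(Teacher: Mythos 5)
The paper offers no proof of this proposition at all: it is stated as a known fact from the $T$-variety literature (it underlies the divisor--support-function correspondence of \cite{tid}), so there is nothing to compare against line by line. Your argument is the standard one and is essentially sound: linearize $L = \mathcal{O}(D)$, cover by Sumihiro-invariant affines, extract homogeneous local generators, and read off an invariant representative. Two points deserve tightening. First, in step (i) the "connected variety into a discrete group" heuristic is not itself a proof, since the family $\{t^*L\}$ must be recognized as algebraic; the honest route is $\textnormal{Cl}(T \times X) \cong \textnormal{Cl}(X)$ for $X$ normal (homotopy invariance of the class group plus removing coordinate hyperplanes), which gives $\alpha^*L \cong p_2^*L$, and then the obstruction to upgrading this to a linearization of $L$ itself (not a power) vanishes for diagonalizable groups because every extension of a torus by $\mathbb{G}_m$ splits and $\textnormal{Pic}(T)=0$; this is exactly the content of Knop--Kraft--Luna--Vust, so you should cite it rather than re-derive it. Second, in step (ii) the $T$-invariance of $D'$ does not follow from the semi-invariance of the transition functions $g_i/g_j$ alone: the divisor cut out on $U_i$ is $\textnormal{div}(g_i)$, and if the reference section $s$ is not itself semi-invariant then $g_i = s_i/s$ need not be, so $\textnormal{div}(g_i)$ need not be $T$-invariant. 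The fix is immediate --- take $s = s_1$, a homogeneous generator on one chart, as your global rational section, so that every $g_i$ is homogeneous of weight $u_i - u_1$ and $D' = -\textnormal{div}(s_1)$ is manifestly $T$-invariant and linearly equivalent to $D$.
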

Different authors have different definitions of concavity; to us, a function $\varphi: N_{\mathbb{Q}} \rightarrow \mathbb{Q}$ is concave if $\varphi(tv + (1-t)w) \geq t\varphi(v) + (1-t)\varphi(w)$ for all $v, w \in N_{\mathbb{Q}}$ and all $t \in [0, 1]$
\begin{proposition}\label{prop_nefconcave}(\cite{tid}, Corollary 3.29) A T-invariant Cartier divisor $D \in T-CaDiv(\mathcal{S})$ with Cartier support function $\{h_{\mathcal{D}, y}\}$ is nef iff all $h_y$ are concave and $\textnormal{deg} (D_{\sigma}) \geq 0$ for every maximal cone $\sigma$ of the tailfan.
\end{proposition}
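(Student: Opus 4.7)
The plan is to prove the biconditional by combining the intersection formulas of Section \ref{sec_curves} with the $T$-cone theorem. Since $TV(\mathcal{S})$ is complete, $D$ is nef iff $\langle D, C\rangle \geq 0$ for every $[C] \in \overline{NE}(TV(\mathcal{S}))$; by Theorem \ref{tct}, this cone is generated by three families of torus-invariant curves. I would therefore show that non-negativity against each of the three families translates, respectively, into the concavity condition on the $h_y$'s and the degree condition on the $D_\sigma$'s stated in the proposition.

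For the horizontal family, Section 3.2 already gives $\langle D, C_\sigma\rangle = \deg(D_\sigma)$, so non-negativity against the unmarked maximal $\sigma$ is immediately the condition $\deg(D_\sigma)\geq 0$ for unmarked maximal $\sigma$. By Remark \ref{remark_horizdsig}, the equality $\deg(D_\sigma)=0$ holds automatically whenever $\sigma$ is marked, hence demanding $\deg(D_\sigma)\geq 0$ for \emph{every} maximal cone of $\textnormal{tail}(\mathcal{S})$, as the proposition does, is equivalent to demanding it only for the unmarked ones.

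The core step is to identify concavity of each $h_y$ with non-negativity of the vertical intersection numbers. A continuous piecewise-affine function on a polyhedral subdivision of $N_\mathbb{Q}$ is concave iff, across every wall $\tau$ separating two maximal polyhedra $\mathcal{D}_y, \mathcal{D}'_y$, the directional derivative drops as we cross $\tau$ transversally. Equation (\ref{equation_vertintersect}) says
\[
\langle D, C_{\tau,y}\rangle \;=\; \mu_\tau^{-1}\bigl(\textnormal{lin}h_{\mathcal{D}',y}-\textnormal{lin}h_{\mathcal{D},y}\bigr)(n_{\tau,\mathcal{D}}),
\]
and the right-hand side measures exactly this drop in slope along the outward normal $n_{\tau,\mathcal{D}}$. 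So $\langle D, C_{\tau,y}\rangle\geq 0$ for every wall $\tau$ of $\mathcal{S}_y$ is equivalent to concavity of $h_y$. Condition (3) of the definition of a Cartier support function makes $\textnormal{lin}h_{\mathcal{D},y}$ depend only on $\textnormal{tail}(\mathcal{D})$; combined with condition (4), this guarantees that $h_y$ agrees with its common tail function $h_{y'}$ for all but finitely many $y$, so checking concavity at the finitely many special fibers and at any generic fiber $y'$ with $\mathcal{S}_{y'}=\textnormal{tail}(\mathcal{S})$ is enough to ensure concavity of all $h_y$.

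The main obstacle is matching the walls appearing in the three sums of (\ref{tct_sum}) with the walls needed for concavity. A wall $\tau$ of $\mathcal{S}_y$ whose tailcone already has dimension $n-1$ is a translate of a wall of $\textnormal{tail}(\mathcal{S})$, and the curve $C_{\tau,y}$ is numerically equivalent to the corresponding $C_{\tau,y'}$; this is why Theorem \ref{tct} excludes such $\tau$ from the first sum and isolates the walls of $\textnormal{tail}(\mathcal{S})$ into the middle sum. Once this numerical identification is confirmed (using the intersection formula above, which shows that $\langle D, C_{\tau,y}\rangle$ depends only on the linear parts whenever $\textnormal{tail}(\tau)$ is $(n-1)$-dimensional), the concavity checks at $\{h_y\}_{y\in Y}$ are in bijection with the vertical generators of Theorem \ref{tct}, and combining this with the horizontal translation above yields the biconditional.
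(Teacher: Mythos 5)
There is a structural problem: within this paper, Proposition \ref{prop_nefconcave} is not proved at all --- it is imported from Petersen--S\"u\ss\ (\cite{tid}, Corollary 3.29) and then used as the \emph{input} to everything that follows. Your argument runs the logic in the opposite direction: you start from Theorem \ref{tct} and deduce the proposition from it. That is circular relative to the paper's architecture. Theorem \ref{tct} is proved by dualizing Proposition \ref{prop_nefintersect} (one shows $\Gamma^{\vee} = \textnormal{Nef}(TV(\mathcal{S}))$ and concludes $\Gamma = \Gamma^{\vee\vee} = \overline{NE}(TV(\mathcal{S}))$), and Proposition \ref{prop_nefintersect} is in turn proved by combining Proposition \ref{prop_nefconcave} with Lemma \ref{lemma_comb} and the intersection formulas. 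So the one step your proof genuinely needs --- that nefness can be tested against the finite list of vertical and horizontal curves because their classes generate $\overline{NE}(TV(\mathcal{S}))$ --- is exactly the statement this proposition is used to establish.

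The translations you perform are correct in substance, but they essentially reproduce the computations already in the proof of Proposition \ref{prop_nefintersect}: the identity $\langle D, C_{\sigma}\rangle = \deg(D_{\sigma})$ together with Remark \ref{remark_horizdsig} handles the maximal cones, and Equation \ref{equation_vertintersect} together with Lemma \ref{lemma_comb} identifies non-negativity across walls with concavity of each $h_y$. What these give you is the equivalence of the stated combinatorial conditions with non-negativity of $D$ against the \emph{invariant} curves; they say nothing yet about arbitrary irreducible complete curves. To close the gap non-circularly you would need either an independent proof that invariant curves generate $\overline{NE}(TV(\mathcal{S}))$ (for instance by degenerating an arbitrary curve to an invariant $1$-cycle under the torus action, as in the toric case), or a direct argument with global sections and base loci of $\mathcal{O}(D)$ on $TV(\mathcal{S})$, which is the route taken in \cite{tid}. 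As written, the proposal assumes its conclusion.
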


Toward our goal of proving Theorem \ref{tct}, we will use Proposition \ref{prop_nefconcave} to show that a Cartier divisor is nef if it intersects all vertical and horizontal curves nonnegatively. The proof of this fact requires a combinatorial lemma. Given a Cartier support function $\{h_{\mathcal{D}, y}\}$ and any $\mathcal{D} \in \mathcal{S}, y \in Y$ such that $\textnormal{dim}(\mathcal{D}_y) = \textnormal{dim}(N_{\mathbb{Q}})$, define $\widetilde{h}_{\mathcal{D}, y}: N_{\mathbb{Q}} \rightarrow \mathbb{Q}$ to be the unique affine function that extends $h_{\mathcal{D}, y}: |\mathcal{D}_y| \rightarrow \mathbb{Q}$.
\begin{lemma}\label{lemma_comb} Let $\{h_{\mathcal{D}, y}\}$ be a Cartier support function. The following are equivalent
\begin{itemize}
\item $h_y: N_{\mathbb{Q}} \rightarrow \mathbb{Q}$ is concave.
\item For every wall $\tau = \mathcal{D}_y \cap \mathcal{D}_y'$ of $\mathcal{S}_y$, there is some $v \in \mathcal{D}_y' \backslash \mathcal{D}_y$ with $h_{\mathcal{D}', y}(v) \leq \widetilde{h}_{\mathcal{D}, y}(v)$.
\end{itemize}
\end{lemma}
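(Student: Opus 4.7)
The plan is to prove the two directions separately, handling ($\Rightarrow$) as a quick consequence of a standard property of concave functions, and devoting most of the work to ($\Leftarrow$). For ($\Rightarrow$), if $h_y$ is concave and agrees with the affine function $\widetilde{h}_{\mathcal{D}, y}$ on the full-dimensional set $\mathcal{D}_y$, then any affine function coinciding with a concave function on a full-dimensional subset dominates it globally; hence $h_y \leq \widetilde{h}_{\mathcal{D}, y}$ on all of $N_{\mathbb{Q}}$. Specializing to any $v \in \mathcal{D}_y' \backslash \mathcal{D}_y$ gives $h_{\mathcal{D}', y}(v) = h_y(v) \leq \widetilde{h}_{\mathcal{D}, y}(v)$, as required.

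For ($\Leftarrow$), I would first upgrade the pointwise hypothesis to a global one. Since $h_{\mathcal{D}', y}$ and $\widetilde{h}_{\mathcal{D}, y}$ are affine and agree on the codimension-one affine span of the wall $\tau$, their difference is a linear functional vanishing on that hyperplane; so the existence of a single $v \in \mathcal{D}_y' \backslash \mathcal{D}_y$ on which $h_{\mathcal{D}', y}(v) \leq \widetilde{h}_{\mathcal{D}, y}(v)$ holds immediately upgrades to $h_{\mathcal{D}', y} \leq \widetilde{h}_{\mathcal{D}, y}$ throughout $\mathcal{D}_y'$, and in fact on the entire half-space of $N_{\mathbb{Q}}$ containing $\mathcal{D}_y'$.

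Next I would establish the key claim: for every top-dimensional $\mathcal{D}_y \in \mathcal{S}_y$, $h_y \leq \widetilde{h}_{\mathcal{D}, y}$ on all of $N_{\mathbb{Q}}$, a global statement made feasible by completeness ($|\mathcal{S}_y| = N_{\mathbb{Q}}$). Fix $p$ in the interior of $\mathcal{D}_y$ and an arbitrary $v \in N_{\mathbb{Q}}$; after perturbing $v$ slightly (and using continuity at the end) I may assume the segment $L(t) = (1-t)p + tv$, $t \in [0,1]$, passes only through top-dimensional cells of $\mathcal{S}_y$ and crosses walls transversally. Let $g(t) := h_y(L(t)) - \widetilde{h}_{\mathcal{D}, y}(L(t))$, a piecewise affine continuous function on $[0,1]$ that vanishes identically on the initial interval during which $L$ remains in $\mathcal{D}_y$. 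At each subsequent wall crossing from a cell $\mathcal{D}_y^{(i-1)}$ to $\mathcal{D}_y^{(i)}$, the upgraded wall condition $h_{\mathcal{D}^{(i)}, y} \leq \widetilde{h}_{\mathcal{D}^{(i-1)}, y}$, combined with the equality of these two affine functions on the wall, forces the directional derivative of $h_y$ along $L$ to weakly decrease at $t_i$. Hence $g$ is concave on $[0,1]$; since $g \equiv 0$ on a nontrivial initial interval, concavity forces $g \leq 0$ on all of $[0,1]$, and continuity handles the perturbation.

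The lemma follows at once: given $v, w \in N_{\mathbb{Q}}$ and $t \in [0,1]$, pick any top-dimensional $\mathcal{D}_y$ containing $u := tv + (1-t)w$; then by affinity of $\widetilde{h}_{\mathcal{D}, y}$ and two applications of the key claim, $h_y(u) = \widetilde{h}_{\mathcal{D}, y}(u) = t\widetilde{h}_{\mathcal{D}, y}(v) + (1-t)\widetilde{h}_{\mathcal{D}, y}(w) \geq t h_y(v) + (1-t) h_y(w)$. The main obstacle is the key claim: the wall condition is intrinsically local (comparing two adjacent cells) while concavity is global, and bridging the gap requires propagating the inequality across arbitrarily long chains of cells. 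Reducing to one-dimensional concavity along a line segment, and reinterpreting the upgraded wall condition as a slopes-decrease condition at each crossing, is what makes this propagation work cleanly.
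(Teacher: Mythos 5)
Your proof is correct, and it is essentially the paper's approach: the paper simply cites (\cite{cox}, Lemma 6.1.5, $(a)\iff(d)$) and asserts the extension from fans to polyhedral subdivisions is straightforward, and your argument (dominate $h_y$ by each $\widetilde{h}_{\mathcal{D},y}$ via a generic segment from the interior of $\mathcal{D}_y$, with slopes weakly decreasing at each wall crossing) is precisely that standard proof carried out in the polyhedral setting. No gaps.
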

\begin{proof} This is a straightforward extension of (\cite{cox}, Lemma 6.1.5 $(a) \iff (d)$) (where it is proved for Cartier support functions on a fan).
\end{proof}

\begin{proposition}\label{prop_nefintersect} A Cartier divisor $D \in \textnormal{CaDiv}(TV(\mathcal{S}))$ is nef iff $\langle D, C\rangle \geq 0$ for all vertical and horizontal curves $C$.
\end{proposition}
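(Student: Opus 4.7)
The plan is to invoke Proposition \ref{prop_nefconcave}, which reduces nefness of a $T$-invariant Cartier divisor to two conditions: (i) concavity of each $h_y$, and (ii) $\textnormal{deg}(D_\sigma) \geq 0$ for every maximal cone $\sigma$ of the tailfan. I will translate both conditions into the intersection numbers $\langle D, C_{\tau,y}\rangle$ and $\langle D, C_\sigma\rangle$ via the formulas developed in Section \ref{sec_curves}.

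The ``only if'' direction is immediate: the parametrizations in Equation \ref{eqn_vertparam} and Map \ref{map_hcurve} exhibit vertical and horizontal curves as images of $\mathbb{P}^1$ and $Y$ respectively, so they are irreducible complete curves; any nef divisor pairs nonnegatively with them by definition. After reducing to a $T$-invariant representative of $D$, it suffices to handle this case.

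For the ``if'' direction, condition (ii) falls out of the horizontal curve formula at the end of Section 3: for an unmarked maximal $\sigma$, $\textnormal{deg}(D_\sigma) = \langle D, C_\sigma\rangle \geq 0$ by hypothesis, and for a marked $\sigma$ the forced equality $\textnormal{deg}(D_\sigma) = 0$ of Remark \ref{remark_horizdsig} handles the case without ever invoking a horizontal curve. For condition (i), I plan to verify the second bullet of Lemma \ref{lemma_comb}. Fix $y$ and a wall $\tau = \mathcal{D}_y \cap \mathcal{D}'_y$, pick any $w$ in the relative interior of $\tau$, and set $v := w - \varepsilon n_{\tau, \mathcal{D}}$ for $\varepsilon > 0$ sufficiently small. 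Since $u_{\tau,\mathcal{D}}$ is the inward normal to $\mathcal{D}_y$ along $\tau$ and pairs positively with $n_{\tau,\mathcal{D}}$, the displacement $-\varepsilon n_{\tau,\mathcal{D}}$ pushes $w$ across $\tau$ into $\mathcal{D}'_y \setminus \mathcal{D}_y$, giving a legal test point. Using that $h_{\mathcal{D},y}, h_{\mathcal{D}',y}$ are affine and agree on $\tau$, a direct expansion yields
$$h_{\mathcal{D}',y}(v) - \widetilde{h}_{\mathcal{D},y}(v) \;=\; -\varepsilon\bigl(\textnormal{lin}\,h_{\mathcal{D}',y} - \textnormal{lin}\,h_{\mathcal{D},y}\bigr)(n_{\tau,\mathcal{D}}) \;=\; -\varepsilon\mu_\tau\langle D, C_{\tau,y}\rangle \;\leq\; 0,$$
where the second equality is Equation \ref{equation_vertintersect} and the final inequality uses the hypothesis. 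Lemma \ref{lemma_comb} then produces concavity of $h_y$, and Proposition \ref{prop_nefconcave} completes the argument.

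The only real obstacle I foresee is the orientation bookkeeping in the concavity step: one must carefully check that $-n_{\tau,\mathcal{D}}$ points into $\mathcal{D}'_y$ (not $\mathcal{D}_y$), and that the resulting calculation carries the sign needed to turn the hypothesis $\langle D, C_{\tau,y}\rangle \geq 0$ into the inequality demanded by Lemma \ref{lemma_comb}. Once this is pinned down, every other step is either a direct appeal to a cited result or a mechanical substitution, and passing from Cartier to $\mathbb{Q}$-Cartier is handled by the usual linearity.
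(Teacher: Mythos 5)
Your proposal is correct and follows essentially the same route as the paper: reduce to a $T$-invariant representative, verify concavity of each $h_y$ via Lemma \ref{lemma_comb} by testing a point displaced off the relative interior of each wall and rewriting the resulting difference of affine functions through Equation \ref{equation_vertintersect}, and handle $\deg(D_\sigma)\geq 0$ by splitting into the marked case (Remark \ref{remark_horizdsig}) and the unmarked case (the horizontal curve formula). The only cosmetic difference is that you test a point on the $\mathcal{D}'_y$ side ($v = w - \varepsilon n_{\tau,\mathcal{D}}$) while the paper tests one on the $\mathcal{D}_y$ side; these verify equivalent forms of the lemma's condition, and your sign bookkeeping is consistent.
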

\begin{proof} The forward direction follows from the definition of nef. To prove the reverse direction, let $D \in \textrm{CaDiv}(\mathcal{S})$ satisfy the condition that $\langle D, C \rangle \geq 0$ for all vertical and horizontal curves. Replace $D$ with a linearly equivalent $T$-invariant divisor and let $\{h_{\mathcal{D}, y}\}$ be its Cartier support function. Let $\tau = \mathcal{D}_y \cap \mathcal{D}_y'$ be a wall of $\mathcal{S}_y$. Fix any $n_{\tau, \mathcal{D}} \in N$ with $\langle n_{\tau, \mathcal{D}}, u_{\tau, \mathcal{D}}\rangle = \mu_{\tau}$ and any $v_{\tau} \in \textrm{relint}(\tau)$. Then pick $\epsilon > 0$ such that $v := v_{\tau} + \epsilon n_{\tau, \mathcal{D}} \in \mathcal{D}_y \backslash \mathcal{D}_y'$. Then
\begin{align*}
h_{\mathcal{D}, y}(v) &= h_{\mathcal{D}, y}(v_{\tau}) +  \textnormal{lin}h_{\mathcal{D}, y}(\epsilon n_{\tau, \mathcal{D}})\\
\widetilde{h}_{\mathcal{D}', y}(v) &= {h}_{\mathcal{D}', y}(v_{\tau}) + \textnormal{lin}{h}_{\mathcal{D}', y}(\epsilon n_{\tau, \mathcal{D}}).
\end{align*}
Because $h_{\mathcal{D}, y}$ and $h_{\mathcal{D}', y}$ agree on $\tau$,
\begin{align*}
\widetilde{h}_{\mathcal{D}', y}(v) - h_{\mathcal{D}, y}(v) = (\textnormal{lin}h_{\mathcal{D}', y} - \textnormal{lin}{h}_{\mathcal{D}, y})(\epsilon n_{\tau, \mathcal{D}}) \geq 0
\end{align*}
where the final inequality comes from applying Equation \ref{equation_vertintersect} to the fact that $\langle D, C_{\tau, y}\rangle \geq 0$. Because this holds for all walls in all slices $\mathcal{S}_y$, we conclude by Lemma \ref{lemma_comb} that each $h_y$ is concave.

To show that $\textnormal{deg} (D_{\sigma}) \geq 0$ for every maximal cone $\sigma$ of the tailfan, observe that if $\sigma$ is marked, then $\textnormal{deg}(D_{\sigma}) = 0$ by Remark \ref{remark_horizdsig}; if $\sigma$ is unmarked, then $\textnormal{deg} (D_{\sigma}) = \langle D, C_{\sigma} \rangle \geq 0$.
\end{proof}

To put Proposition \ref{prop_nefintersect} in context, remember that a $T$-variety has infinitely many distinct vertical curves. Indeed, if $\tau$ is a wall of $\textnormal{tail}(\mathcal{S})$, then for every $y \in Y$ there is (by completeness) a vertical curve $C_{\tau', y}$ where $\tau'$ is a wall of $\mathcal{S}_y$ with tailcone $\tau$. The next proposition shows that the classes of all such curves lie on a single ray of $N_1(TV(\mathcal{S}))$.

\begin{proposition}\label{prop_linequiv} Let $\tau = \sigma \cap \sigma'$ be a wall of $\textnormal{tail}(\mathcal{S})$, where $\sigma, \sigma'$ are full dimensional cones of $\textnormal{tail}(\mathcal{S})$. The classes
\[
\mathcal{C}_\tau = \left\{[C_{\tau', y}] \middle| \begin{array}{c}\tau' = \mathcal{D}_y \cap \mathcal{D}_y' \textnormal{ for some } \mathcal{D}, \mathcal{D}' \textnormal{ with} \\ \textnormal{tail}(\mathcal{D}) = \sigma, \textnormal{tail}(\mathcal{D}') = \sigma' \end{array} \right\} \subseteq N_1(TV(\mathcal{S}))
\]
are positive multiples of each other. Specifically, $[C_{\tau_1, y_1}] = \mu_{\tau_1}^{-1}\mu_{\tau_2}[C_{\tau_2, y_2}]$.
\end{proposition}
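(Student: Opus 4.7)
The plan is to use the intersection formula (Equation \ref{equation_vertintersect}) to compute $\langle D, C_{\tau', y}\rangle$ for every $T$-invariant $\mathbb{Q}$-Cartier divisor $D$, and show that up to the factor $\mu_{\tau'}$ this quantity depends only on $\tau = \textnormal{tail}(\tau') = \sigma \cap \sigma'$. Since every Cartier divisor is linearly equivalent to a $T$-invariant one (by the proposition preceding Proposition \ref{prop_nefconcave}), equality of intersection numbers against all $T$-invariant Cartier divisors already forces the stated numerical identity in $N_1(TV(\mathcal{S}))$.

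The first step is to identify the geometric data attached to different walls $\tau'$ with tailcone $\tau$. Fix $\mathcal{D}, \mathcal{D}' \in \mathcal{S}$ with tailcones $\sigma, \sigma'$ and $\tau' = \mathcal{D}_y \cap \mathcal{D}'_y$. The cone $\lambda_{\tau', \mathcal{D}}$ of $\mathcal{N}(\mathcal{D}_y)$ dual to $\tau'$ consists of those $u \in \sigma^\vee$ at which $h_{\mathcal{D}_y}$ is constant on $\tau'$; since $\tau'$ has tailcone $\tau$, any such $u$ must vanish on $\tau$, so $\lambda_{\tau', \mathcal{D}} \subseteq \sigma^\vee \cap \tau^\perp$. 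As $\tau$ is a facet of $\sigma$, the set $\sigma^\vee \cap \tau^\perp$ is a ray, and by dimension $\lambda_{\tau', \mathcal{D}}$ must equal all of it. Let $u_0$ be the primitive lattice generator of this common ray. Then $u_{\tau', \mathcal{D}} = \mu_{\tau'} u_0$ by definition of $\mu_{\tau'}$, and the normalization $\langle n_{\tau', \mathcal{D}}, u_{\tau', \mathcal{D}}\rangle = \mu_{\tau'}$ becomes $\langle n_{\tau', \mathcal{D}}, u_0 \rangle = 1$.

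Next I compare the linear parts for two different walls $\tau_1 = \mathcal{D}_{1, y_1} \cap \mathcal{D}'_{1, y_1}$ and $\tau_2 = \mathcal{D}_{2, y_2} \cap \mathcal{D}'_{2, y_2}$ in the set $\mathcal{C}_\tau$. By condition (3) in the definition of a $\mathbb{Q}$-Cartier support function, $\textnormal{lin}\, h_{\mathcal{D}_1, y_1} = \textnormal{lin}\, h_{\mathcal{D}_2, y_2}$ (both have tailcone $\sigma$) and likewise $\textnormal{lin}\, h_{\mathcal{D}'_1, y_1} = \textnormal{lin}\, h_{\mathcal{D}'_2, y_2}$. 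Hence the linear functional
\[
L := \textnormal{lin}\, h_{\mathcal{D}'_i, y_i} - \textnormal{lin}\, h_{\mathcal{D}_i, y_i} \in M_\mathbb{Q}
\]
is independent of $i$. Because $h_{\mathcal{D}_i, y_i}$ and $h_{\mathcal{D}'_i, y_i}$ agree as affine functions on $\tau_i$, their linear parts agree on the tailcone $\tau$ of $\tau_i$, so $L|_\tau = 0$. Since $\tau$ spans the hyperplane $u_0^\perp$, the functional $L$ must be a scalar multiple of $u_0$: write $L = c\, u_0$ for a rational constant $c$ that depends only on $D$ and on $\tau$.

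Plugging into Equation \ref{equation_vertintersect}, for every $T$-invariant $\mathbb{Q}$-Cartier $D$ and every $[C_{\tau_i, y_i}] \in \mathcal{C}_\tau$ we obtain
\[
\langle D, C_{\tau_i, y_i}\rangle = \mu_{\tau_i}^{-1}\, L(n_{\tau_i, \mathcal{D}_i}) = \mu_{\tau_i}^{-1}\, c\, \langle n_{\tau_i, \mathcal{D}_i}, u_0 \rangle = \mu_{\tau_i}^{-1}\, c.
\]
Consequently $\mu_{\tau_1} \langle D, C_{\tau_1, y_1}\rangle = \mu_{\tau_2} \langle D, C_{\tau_2, y_2}\rangle$ for every $T$-invariant $\mathbb{Q}$-Cartier $D$, hence by linear equivalence for every Cartier divisor, which gives the desired numerical identity $[C_{\tau_1, y_1}] = \mu_{\tau_1}^{-1} \mu_{\tau_2} [C_{\tau_2, y_2}]$. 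The only subtle step is the identification of $\lambda_{\tau', \mathcal{D}}$ with the fixed ray $\sigma^\vee \cap \tau^\perp$, which makes $u_0$ (and therefore $L$ and $c$) independent of the choice of wall in $\mathcal{C}_\tau$; everything else is a direct application of the intersection formula together with the rigidity of linear parts in condition (3) of the Cartier support function definition.
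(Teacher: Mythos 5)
Your proposal is correct and follows essentially the same route as the paper: apply the intersection formula from Equation \ref{equation_vertintersect}, use condition (3) of the support-function definition to see that the linear parts $\textnormal{lin}\,h_{\mathcal{D},y}$ depend only on the tailcones $\sigma,\sigma'$, and observe that the normal data attached to the wall depends only on $\tau$. Your explicit identification of $\lambda_{\tau',\mathcal{D}}$ with the ray $\sigma^\vee\cap\tau^\perp$ and the factorization $L=c\,u_0$ is just a slightly more detailed way of phrasing the paper's remark that one may choose $n_{\tau_1,\mathcal{D}}=n_{\tau_2,\mathcal{D}}$.
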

\begin{proof} 
Let $\{h_{\mathcal{D}, y}\}$ be the Cartier support funtion of some $D \in \textrm{T-CaDiv}(TV(\mathcal{S}))$. All $h_{\mathcal{D}, y}$ with tail($\mathcal{D}) = \sigma$ (respectively $\sigma'$) will have the same linear part, say $-u_{\sigma} \in M_{\mathbb{Q}}$ (respectively $-u_{\sigma'} \in M_{\mathbb{Q}}$). Then for two classes $[C_{\tau_1, y_1}], [C_{\tau_2, y_2}] \in \mathcal{C}_{\tau}$, Equation \ref{equation_vertintersect} calculates the intersections as
\[
\langle D, C_{\tau_1, y_1} \rangle = \mu_{\tau_1}^{-1}\langle u_{\sigma} - u_{\sigma'}, n_{\tau_1, \mathcal{D}} \rangle \hspace{1cm} \langle D, C_{\tau_2, y_2} \rangle = \mu_{\tau_2}^{-1}\langle u_{\sigma} - u_{\sigma'}, n_{\tau_2, \mathcal{D}} \rangle
\]
Since we can choose $n_{\tau_1, \mathcal{D}} = n_{\tau_2, \mathcal{D}}$, it follows that $\langle D, C_{\tau_1, y_1} \rangle  = \mu_{\tau_1}^{-1}\mu_{\tau_2}\langle D, C_{\tau_2, y_2} \rangle$ for all $D$.
\end{proof}

We are finally ready to prove Theorem \ref{tct}. Using the propositions above, the proof is nearly identical to the proof of the toric cone theorem in (\cite{cox}, Theorem 6.3.20(b)).
\begin{proof}(Theorem \ref{tct}) Let $\Gamma$ be the rational polyhedral cone in $NE(TV(\mathcal{S}))$ defined by the right hand side of Equation \ref{tct_sum}. By definition, $\Gamma$ includes the classes of all horizontal curves; by Proposition \ref{prop_linequiv}, it also includes the classes of all vertical curves. Therefore, Proposition \ref{prop_nefintersect} implies that $\Gamma^{\vee} = \textrm{Nef}(TV(\mathcal{S}))$, so $\Gamma = \Gamma^{\vee \vee} = \overline{NE}(TV(\mathcal{S}))$.
\end{proof}

\section{Examples}\label{sec_examples}
\subsection{Example 1}

Consider the divisoral fan $\mathcal{S}$ shown in Figure \ref{fig_pctb}. $TV(\mathcal{S})$ is the projectivized cotangent bundle of the first Hirzebruch surface. All horizontal divisors\footnote{See \cite{tid} for a definition and description of horizontal and vertical divisors} in $\widetilde{TV}(\mathcal{S})$ are contracted. For each vertical divisor $D_{[y], v}$ and each maximal $p$-divisor $\mathcal{D}_i \in \mathcal{S}$, we write the Weil divisor $\sum a_y[y]$ and an element $u \in M$ in Table \ref{tab_pctbdivisors} to encode the Cartier support function  $\{h_{\mathcal{D}_i, y}(w) = -a_y - \langle u, w \rangle\}$ of $D_{[y], v}$. For example, the Cartier support function for $D_{[0], (0, 0)}$ includes $h_{\mathcal{D}_4, \infty}(v) = 1 - \langle (-2, -1), v\rangle$.

\begin{figure}[!ht]
\centering
\begin{tikzpicture}[scale = 0.95]

\path[shift = {(0, 0)}, fill = gray1] (1.6, 0) -- (0, 0) -- (0, 0.5) -- (1.1, 1.6) -- (1.6, 1.6) -- cycle;
\path[shift = {(0, 0)}, fill = gray3] (1.1, 1.6) -- (0, 0.5) -- (0, 1.6) -- cycle;
\path[shift = {(0, 0)}, fill = gray5] (0, 1.6) -- (0, 0.5) -- (-0.55, 1.6) -- cycle;
\path[shift = {(0, 0)}, fill = gray7] (-0.55, 1.6) -- (0, 0.5) -- (0, 0) -- (-1.6, 1.6) -- cycle;
\path[shift = {(0, 0)}, fill = gray2] (-1.6, 1.6) -- (0, 0) -- (0, -0.5) -- (-1.6, -0.5) -- cycle;
\path[shift = {(0, 0)}, fill = gray4] (-1.6, -0.5) -- (0, -0.5) -- (0, -1.6) -- (-1.6, -1.6) -- cycle;
\path[shift = {(0, 0)}, fill = gray6] (0, -1.6) -- (0, -0.5) -- (1.1, -1.6) -- cycle;
\path[shift = {(0, 0)}, fill = gray8] (1.1, -1.6) -- (0, -0.5) -- (0, 0) -- (1.6, 0) -- (1.6, -1.6) -- cycle;

\path[shift = {(4.5, 0)}, fill = gray1] (1.6, 0) -- (0.5, 0) -- (1.6, 1.1) -- cycle;
\path[shift = {(4.5, 0)}, fill = gray3] (1.6, 1.1) -- (0.5, 0) -- (0, 0) -- (0, 1.6) -- (1.6, 1.6) -- cycle;
\path[shift = {(4.5, 0)}, fill = gray5] (0, 1.6) -- (0, 0) -- (-0.85, 1.6) -- cycle;
\path[shift = {(4.5, 0)}, fill = gray7] (-0.85, 1.6) -- (0, 0) -- (-1.6, 1.6) -- cycle;
\path[shift = {(4.5, 0)}, fill = gray2] (-1.6, 1.6) -- (0, 0) -- (-1.6, 0) -- cycle;
\path[shift = {(4.5, 0)}, fill = gray4] (-1.6, 0) -- (0, 0) -- (0, -1.6) -- (-1.6, -1.6) -- cycle;
\path[shift = {(4.5, 0)}, fill = gray6] (0, -1.6) -- (0, 0) -- (0.5, 0) -- (1.6, -1.1) -- (1.6, -1.6) -- cycle;
\path[shift = {(4.5, 0)}, fill = gray8] (1.6, -1.1) -- (0.5, 0) -- (1.6, 0) -- cycle;

\path[shift = {(9, 0)}, fill = gray1] (1.6, 0) -- (0, 0) -- (1.6, 1.6) -- cycle;
\path[shift = {(9, 0)}, fill = gray3] (1.6, 1.6) -- (0, 0) -- (0, 1.6) -- cycle;
\path[shift = {(9, 0)}, fill = gray5] (0, 1.6) -- (0, 0) -- (-0.5, 0.5) -- (-1.05, 1.6) -- cycle;
\path[shift = {(9, 0)}, fill = gray7] (-1.05, 1.6) -- (-0.5, 0.5) -- (-1.6, 1.6) -- cycle;
\path[shift = {(9, 0)}, fill = gray2] (-1.6, 1.6) -- (-0.5, 0.5) -- (-1.6, 0.5) -- cycle;
\path[shift = {(9, 0)}, fill = gray4] (-1.6, 0.5) -- (-0.5, 0.5) -- (0, 0) -- (0, -1.6) -- (-1.6, -1.6) -- cycle;
\path[shift = {(9, 0)}, fill = gray6] (0, -1.6) -- (0, 0) -- (1.6, -1.6) -- cycle;
\path[shift = {(9, 0)}, fill = gray8] (1.6, -1.6) -- (0, 0) -- (1.6, 0) -- cycle;

\foreach \x in {-3, ..., 3} { \foreach \y in {-3, ..., 3} { 
	\draw[fill = black, shift = {(0, 0)}] (\x / 2, \y / 2) circle(.3mm); 
	\draw[fill = black, shift = {(4.5, 0)}] (\x / 2, \y / 2) circle(.3mm); 
	\draw[fill = black, shift = {(9, 0)}] (\x / 2, \y / 2) circle(.3mm); 
	}}
		
\draw[shift = {(0, 0)}, very thick] (1.7, 0) -- (0, 0) -- (0, 0.5) -- (1.2, 1.7)
												(1.2, 1.7) -- (0, 0.5) -- (0, 1.7)
												(0, 1.7) -- (0, 0.5) -- (-0.6, 1.7)
												(-0.6, 1.7) -- (0, 0.5) -- (0, 0) -- (-1.7, 1.7)
												(-1.7, 1.7) -- (0, 0) -- (0, -0.5) -- (-1.7, -0.5)
												(-1.7, -0.5) -- (0, -0.5) -- (0, -1.7)
												(0, -1.7) -- (0, -0.5) -- (1.2, -1.7)
												(1.2, -1.7) -- (0, -0.5) -- (0, 0) -- (1.7, 0);
												
\draw (1.9, 0.66) node[] {{\small $\mathcal{D}_1$}};
\draw (0.5, 1.9) node[] {{\small $\mathcal{D}_2$}};
\draw (-0.25, 1.9) node[] {{\small $\mathcal{D}_3$}};
\draw (-1, 1.9) node[] {{\small $\mathcal{D}_4$}};
\draw (-1.9, 0.5) node[] {{\small $\mathcal{D}_5$}};
\draw (-1.9, -1.25) node[] {{\small $\mathcal{D}_6$}};
\draw (0.5, -1.9) node[] {{\small $\mathcal{D}_7$}};
\draw (1.9, -0.66) node[] {{\small $\mathcal{D}_8$}};

\draw[shift = {(4.5, 0)}, very thick] (1.7, 0) -- (0.5, 0) -- (1.7, 1.2)
												(1.7, 1.2) -- (0.5, 0) -- (0, 0) -- (0, 1.7)
												(0, 1.7) -- (0, 0) -- (-0.75 - .15, 1.7)
												(-0.75 - .15, 1.7) -- (0, 0) -- (-1.7, 1.7)
												(-1.7, 1.7) -- (0, 0) -- (-1.7, 0)
												(-1.7, 0) -- (0, 0) -- (0, -1.7)
												(0, -1.7) -- (0, 0) -- (0.5, 0) -- (1.7, -1.2)
												(1.7, -1.2) -- (0.5, 0) -- (1.7, 0);

\draw[shift = {(9, 0)}, very thick] (1.7, 0) -- (0, 0) -- (1.7, 1.7)
												(1.7, 1.7) -- (0, 0) -- (0, 1.7)
												(0, 1.7) -- (0, 0) -- (-0.5, 0.5) -- (-1.1, 1.7)
												(-1.1, 1.7) -- (-0.5, 0.5) -- (-1.7, 1.7)
												(-1.7, 1.7) -- (-0.5, 0.5) -- (-1.7, 0.5)
												(-1.7, 0.5) -- (-0.5, 0.5) -- (0, 0) -- (0, -1.7)
												(0, -1.7) -- (0, 0) -- (1.7, -1.7)
												(1.7, -1.7) -- (0, 0) -- (1.7, 0);

\draw[<->, very thick] (-1, -2.5) -- (10, -2.5);
\draw[shift = {(0, -2.5)}] (0, -0.2) node[below] {0} -- (0, 0.2);
\draw[shift = {(4.5, -2.5)}] (0, -0.2) node[below] {1} -- (0, 0.2);
\draw[shift = {(9, -2.5)}] (0, -0.2) node[below] {$\infty$} -- (0, 0.2);
												
\end{tikzpicture}
\caption{}
\label{fig_pctb}
\end{figure}

\begin{table}[!ht]
\begin{center}
\begin{tabular}{ r | c c c c c c c c}
   & $\mathcal{D}_1$ & $\mathcal{D}_2$ & $\mathcal{D}_3$ & $\mathcal{D}_4$  & $\mathcal{D}_5 $ & $ \mathcal{D}_6 $ & $ \mathcal{D}_7 $ & $ \mathcal{D}_8$ \\ \hline

  \multirow{2}{*}{\scriptsize $D_{[0], (0, 1)} $} & {\scriptsize $ 0 $} & {\scriptsize $ 0 $ } & {\scriptsize  $ 0 $ } & {\scriptsize  $ 0 $ } & {\scriptsize  $ 0 $ } & {\scriptsize  $ 0 $ } & {\scriptsize  $ 0 $ } & {\scriptsize  $ 0 $}  \\ 
& {\scriptsize $ (0, 1) $} & {\scriptsize $ (0,1) $ } & {\scriptsize  $ (1,1) $ } & {\scriptsize  $ (1,1) $ }  & {\scriptsize  $ (0, 0) $ } & {\scriptsize  $ (0, 0) $ } & {\scriptsize  $ (0, 0) $ } & {\scriptsize  $ (0, 0) $} \\ \hline

  \multirow{2}{*}{\scriptsize $D_{[0], (0, 0)}$ } & {\tiny  $[0] \vslimminus [1]$ } & {\scriptsize  $0$ } & {\scriptsize  $0$ } & {\tiny  $[0] \vslimminus [\infty]$ } & {\tiny  $[0] \vslimminus [\infty]$ } & {\scriptsize  $ 0 $ } & {\scriptsize  $ 0 $ } & {\tiny  $[0] \vslimminus [1]$} \\ 
  & {\scriptsize  $(1,\slimminus1)$ } & {\scriptsize  $(0, 0)$ } & {\scriptsize  $(0, 0)$ } & {\scriptsize  $(\slimminus2,\slimminus1)$ } & {\scriptsize  ${(0,1)}$ } & {\scriptsize  $ (0, 0) $ } & {\scriptsize  $ (0, 0) $ } & {\scriptsize  ${(1, 1)}$} \\ \hline
  
  \multirow{2}{*}{\scriptsize $D_{[0], (0, \slimminus1)}$ } & {\scriptsize  $0$ } & {\scriptsize  $0$ } & {\scriptsize  $0$ } & {\scriptsize  $0$ }  & {\scriptsize  $0$ } & {\scriptsize  $0$ } & {\scriptsize  $0$ } & {\scriptsize  $0$}\\ 
  & {\scriptsize  $(0, 0)$ } & {\scriptsize  $(0, 0)$ } & {\scriptsize  $(0, 0)$ } & {\scriptsize  $(0, 0)$ }  & {\scriptsize  ${(\slimminus1, \slimminus1)}$ } & {\scriptsize  ${(\slimminus1, \slimminus1)}$ } & {\scriptsize  ${(0, \slimminus1)}$ } & {\scriptsize  ${(0, \slimminus1)}$}\\ \hline
  
  \multirow{2}{*}{\scriptsize $D_{[1], (1, 0)}$ } & {\scriptsize  $0$ } & {\scriptsize  $0$ } & {\scriptsize  $0$ } & {\scriptsize  $0$ } & {\scriptsize  $0$ } & {\scriptsize  $0$ } & {\scriptsize  $0$ } & {\scriptsize  $0$}\\ 
  & {\scriptsize  ${(1, 0)}$ } & {\scriptsize  ${(1, 0)}$ } & {\scriptsize  $(0, 0)$ } & {\scriptsize  $(0, 0)$ } & {\scriptsize  $(0, 0)$ } & {\scriptsize  $(0, 0)$ } & {\scriptsize  ${(1, 0)}$ } & {\scriptsize  ${(1, 0)}$}\\ \hline

  \multirow{2}{*}{\scriptsize $D_{[1], (0, 0)}$ } & {\scriptsize  $0$ } & {\tiny  $[1]\vslimminus[0]$ } & {\tiny  $[1]\vslimminus[0]$ } & {\tiny  $[1]\vslimminus[\infty]$ } & {\tiny  $[1]\vslimminus[\infty]$ } & {\tiny  $[1]\vslimminus[0]$ } & {\scriptsize  $[1]\vslimminus[0]$ } & {\scriptsize  $0$} \\ 
  & {\scriptsize  $(0, 0)$ } & {\scriptsize  ${(\slimminus1, 1)}$ } & {\scriptsize  ${(1, 1)}$ } & {\scriptsize  ${(\slimminus1, 0)}$ } & {\scriptsize  ${(\slimminus1, 0)}$ } & {\scriptsize  ${(\slimminus1, \slimminus1)}$ } & {\scriptsize  ${(\slimminus1, \slimminus1)}$ } & {\scriptsize  $(0, 0)$} \\ \hline
  
  \multirow{2}{*}{\scriptsize $D_{[\infty], (\slimminus1, 1)}$ } & {\scriptsize  $0$ } & {\scriptsize  $0$ } & {\scriptsize  $0$ } & {\scriptsize  $0$ } & {\scriptsize  $0$ } & {\scriptsize  $0$ } & {\scriptsize  $0$ } & {\scriptsize  $0$} \\ 
  & {\scriptsize  $(0, 0)$ } & {\scriptsize  $(0, 0)$ } & {\scriptsize  ${(\slimminus1, 0)}$ } & {\scriptsize  ${(\slimminus1, 0)}$ } & {\scriptsize  ${(\slimminus1, 0)}$ } & {\scriptsize  ${(\slimminus1, 0)}$ } & {\scriptsize  $(0, 0)$ } & {\scriptsize  $(0, 0)$} \\ \hline
  
  \multirow{2}{*}{\scriptsize $D_{[\infty], (0, 0)}$ } & {\tiny  $[\infty]\vslimminus[1]$ } & {\tiny  $[\infty]\vslimminus[0]$ } & {\tiny  $[\infty]\vslimminus[0]$ } & {\scriptsize  $0$ } & {\scriptsize  $0$ } & {\tiny  $[\infty] \vslimminus [0]$ } & {\tiny  $[\infty] \vslimminus [0]$ } & {\tiny  $[\infty] \vslimminus [1]$}\\
  & {\scriptsize  ${(1, 0)}$ } & {\scriptsize  ${(0, 1)}$ } & {\scriptsize  ${(2, 1)}$ } & {\scriptsize  $(0, 0)$ } & {\scriptsize  $(0, 0)$ } & {\scriptsize  ${(0, \slimminus1)}$ } & {\scriptsize  ${(0, \slimminus1)}$ } & {\scriptsize  ${(1, 0)}$}\\ \hline
\end{tabular}
\end{center}
\caption{Torus invariant divisors on $TV(\mathcal{S})$}
\label{tab_pctbdivisors}
\end{table}

Because every maximal-dimensional cone of $\textrm{tail}(\mathcal{S})$ is marked, $TV(\mathcal{S})$ has no horizontal curves. Let $\tau_{i, j, y}$ be the wall of $\mathcal{S}_y$ realized as the intersection between $\mathcal{D}_i$ and $\mathcal{D}_j$ (if such a wall exists). Using Proposition \ref{prop_linequiv}, we see that the numerical equivalence class of $C_{\tau_{i, j, y}, y}$ only depends on $i$ and $j$; to save space, we abbreviate $[C_{\tau_{i, j, y}, y}]$ as $C_{i, j}$. 

As an example of a calculation, consider $C_{1, 2}$ and the $T$-invariant divisor $D_{[0], (0, 0)}$ with Cartier support function $\{h_{\mathcal{D}, y}\}$. Using notation from Section \ref{section_vertical_curves}, $n_{\tau, \mathcal{D}_2} = (0, 1)$. The relevant linear parts of the Cartier support function are $\textrm{lin}h_{\mathcal{D}_1, 0} = -(1, -1) \in M$ and $\textrm{lin}h_{\mathcal{D}_2, 0} = (0, 0) \in M$. The intersection can then be calculated using Equation \ref{equation_vertintersect}
\[
\langle D_{[0], (0, 0)}, C_{1,2} \rangle = 1^{-1}\langle (-1, 1) - (0, 0), (0, 1)\rangle = 1
\]

The complete list of intersections is in Table \ref{tab_pctbintersection}. The canonical divisor is also listed; it can be expressed as a sum of the vertical divisors using the formula from (\cite{tid}, Theorem 3.21).

\begin{table}[!ht]
\begin{center}
\begin{tabular}{ r | c c c c c c c c c c c c}
   & {\tiny $C_{1, 2}$} & {\tiny $C_{2, 3}$} & {\tiny $C_{3, 4}$} & {\tiny $C_{4, 5}$} & {\tiny $C_{5, 6}$} & {\tiny $C_{6, 7}$} & {\tiny $C_{7, 8}$} & {\tiny $C_{8, 1}$}  &  {\tiny $C_{1, 4}$} & {\tiny $C_{5, 8}$} & {\tiny $C_{2, 7}$} & {\tiny $C_{3, 6}$} \\ \hline

  {\scriptsize $D_{[0], (0, 1)} $} & 0 & -1 & 0& 1 & 0& 0& 0&1 & -1& 0& 1&1 \\ \hline

  {\scriptsize $D_{[0], (0, 0)}$ } & 1 & 0& 1& -2  & 1& 0& 1&-2& 3& 1& 0&0 \\ \hline

  {\scriptsize $D_{[0], (0, -1)}$ } & 0& 0& 0& 1   & 0& 1& 0&1 & 0 & 1& 1&1 \\ \hline

  {\scriptsize $D_{[1], (1, 0)}$ } & 0& 1& 0& 0    & 0& 1& 0&0 & 1 & 1& 0&0 \\ \hline

  {\scriptsize $D_{[1], (0, 0)}$ } & 1& -2& 1& 0   & 1& 0& 1&0 & 1 & 1& 2&2 \\ \hline

  {\scriptsize $D_{[\infty], (-1, 1)}$ } & 0&  1& 0&0&0& 1& 0&0 & 1 & 1& 0&0 \\ \hline

  {\scriptsize $D_{[\infty], (0, 0)}$ } &  1& -2& 1&0&1& 0& 1&0 & 1 & 1& 2&2 \\ \hline
  
  {\scriptsize $K_{X}$ } & -2 & 2&-2&0&-2& -2& -2&0 & -4 & -4& -4&-4 \\ \hline
\end{tabular}
\end{center}
\caption{Intersections of divisors and curves on $TV(\mathcal{S})$}
\label{tab_pctbintersection}
\end{table}
\subsection{Example 2}
Let $\sigma_1, \sigma_2, \sigma_3$ be the cones
\begin{align*}
\sigma_1 &= \mathbb{Q}_{\geq 0} \cdot(1, 0) +  \mathbb{Q}_{\geq 0} \cdot(0, 1)\\
\sigma_2 &= \mathbb{Q}_{\geq 0} \cdot(0, 1) +  \mathbb{Q}_{\geq 0} \cdot(-1, -1)\\
\sigma_3 &= \mathbb{Q}_{\geq 0} \cdot(1, 0) +  \mathbb{Q}_{\geq 0} \cdot(-1, -1))
\end{align*}
and let $\mathcal{S}$ be the divisorial fan on $\mathbb{P}^1$ having the following maximal $p$-divisors
\begin{align*}
\mathcal{D}_1 &= ((2/3, 1/2) + \sigma_1)[0] + ((-2/3, -1/2) + \sigma_1)[1] + \emptyset[\infty]\\
\mathcal{D}_2 &= ((2/3, 1/2) + \sigma_2)[0] + ((-2/3, -1/2) + \sigma_2)[1] + ((-1, -1) + \sigma_2)[\infty]\\
\mathcal{D}_3 &= ((2/3, 1/2) + \sigma_3)[0] + ((-2/3, -1/2) + \sigma_3)[1] + ((-1, -1) + \sigma_3)[\infty]\\
\mathcal{D}_4 &= \emptyset[0] + \emptyset[1] + ((-1, -1) + \sigma_1)[\infty]
\end{align*}

\begin{figure}[!ht]
\centering
\begin{tikzpicture}

\path[shift = {(0, 0)}, fill = gray1] (.6 * 2.4, .6 * .5) -- (.6 * .68, .6 * .5) -- (.6 * .68, .6 * 2.4) -- (.6 * 2.4, .6 * 2.4) -- cycle;
\path[shift = {(0, 0)}, fill = gray3] (.6 * .68, .6 * 2.4) -- (.6 * .68, .6 * .5) -- (.6 * -2.24, .6 * -2.4) -- (.6 * -2.4, .6 * -2.4) -- (.6 * -2.4, .6 * 2.4) -- cycle;
\path[shift = {(0, 0)}, fill = gray5] (.6 * -2.24, .6 * -2.4) -- (.6 * .68, .6 * .5) -- (.6 * 2.4, .6 * .5) -- (.6 * 2.4, .6 * -2.4) -- cycle;

\draw[shift = {(4.5, 0)}, fill = gray1] (.6 * 2.4, .6 * -.5) -- (.6 * -.68, .6 * -.5) -- (.6 * -.68, .6 * 2.4) -- (.6 * 2.4, .6 * 2.4) -- cycle;
\draw[shift = {(4.5, 0)}, fill = gray3] (.6 * -.68, .6 * 2.4) -- (.6 * -.68, .6 * -.5) -- (.6 * -2.4, .6 * -2.24) -- (.6 * -2.4, .6 * -2.4) -- (.6 * -2.4, .6 * 2.4) -- cycle;
\draw[shift = {(4.5, 0)}, fill = gray5] (.6 * -2.4, .6 * -2.24) -- (.6 * -.68, .6 * -.5) -- (.6 * 2.4, .6 * -.5) -- (.6 * 2.4, .6 * -2.4) -- (.6 * -2.4, .6 * -2.4) -- cycle;
												
\draw[shift = {(9, 0)}, fill = gray7] (.6 * 2.4, .6 * -1) -- (.6 * -1, .6 * -1) -- (.6 * -1, .6 * 2.4) -- (.6 * 2.4, .6 * 2.4) -- cycle;
\draw[shift = {(9, 0)}, fill = gray3] (.6 * -1, .6 * 2.4) -- (.6 * -1, .6 * -1) -- (.6 * -2.4, .6 * -2.4) -- (.6 * -2.4, .6 * -2.4) -- (.6 * -2.4, .6 * 2.4) -- cycle;
\draw[shift = {(9, 0)}, fill = gray5] (.6 * -2.4, .6 * -2.4) -- (.6 * -1, .6 * -1) -- (.6 * 2.4, .6 * -1) -- (.6 * 2.4, .6 * -2.4) -- cycle;

\foreach \x in {-2, ..., 2} { \foreach \y in {-2, ..., 2} { 
	\draw[fill = black, shift = {(0, 0)}] (\x * .6, \y * .6) circle(.4mm); 
	\draw[fill = black, shift = {(4.5, 0)}] (\x *.6, \y *.6) circle(.4mm); 
	\draw[fill = black, shift = {(9, 0)}] (\x *.6, \y *.6) circle(.4mm); 
	}}
		
\draw[shift = {(0, 0)}, very thick] (.6 * 2.5, .6 * .5) -- (.6 * .68, .6 * .5) -- (.6 * .68, .6 * 2.5)
												(.6 * .68, .6 * 2.5) -- (.6 * .68, .6 * .5) -- (.6 * -2.34, .6 * -2.5)
												(.6 * -2.34, .6 * -2.5) -- (.6 * .68, .6 * .5) -- (.6 * 2.5, .6 * .5);

\draw[shift = {(4.5, 0)}, very thick] (.6 * 2.5, .6 * -.5) -- (.6 * -.68, .6 * -.5) -- (.6 * -.68, .6 * 2.5)
												(.6 * -.68, .6 * 2.5) -- (.6 * -.68, .6 * -.5) -- (.6 * -2.5, .6 * -2.34)
												(.6 * -2.5, .6 * -2.34) -- (.6 * -.68, .6 * -.5) -- (.6 * 2.5, .6 * -.5);
												
\draw[shift = {(9, 0)}, very thick] (.6 * 2.5, .6 * -1) -- (.6 * -1, .6 * -1) -- (.6 * -1, .6 * 2.5)
												(.6 * -1, .6 * 2.5) -- (.6 * -1, .6 * -1) -- (.6 * -2.5, .6 * -2.5)
												(.6 * -2.5, .6 * -2.5) -- (.6 * -1, .6 * -1) -- (.6 * 2.5, .6 * -1);

\draw (1.7, 1) node[] {{\small $\mathcal{D}_1$}};
\draw (-0.5, 1.7) node[] {{\small $\mathcal{D}_2$}};
\draw (0.5, -1.7) node[] {{\small $\mathcal{D}_3$}};
\draw[shift = {(9, 0)}] (1.7, 1) node[] {{\small $\mathcal{D}_4$}};

\draw[<->, very thick] (-1, -2.5) -- (10, -2.5);
\draw[shift = {(0, -2.5)}] (0, -0.2) node[below] {0} -- (0, 0.2);
\draw[shift = {(4.5, -2.5)}] (0, -0.2) node[below] {1} -- (0, 0.2);
\draw[shift = {(9, -2.5)}] (0, -0.2) node[below] {$\infty$} -- (0, 0.2);
												
\end{tikzpicture}
\caption{The divisorial fan $\mathcal{S}$}
\end{figure}
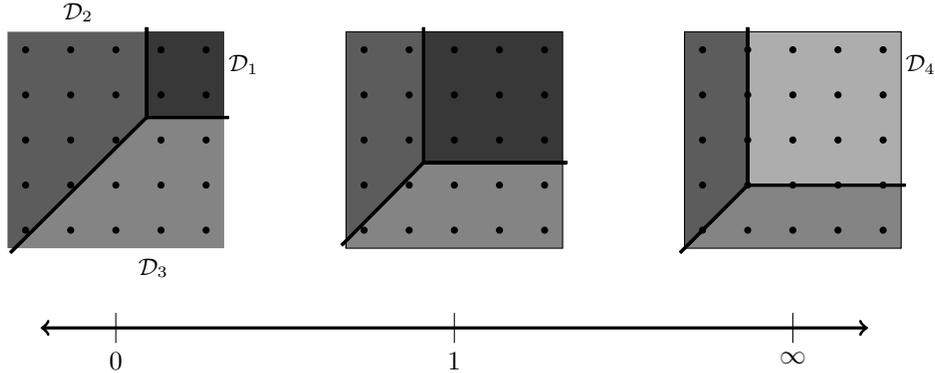

The $T$-variety corresponding to $\mathcal{S}$ is a deformation of $\mathbb{P}^3$. The $T$-invariant divisors and their intersections with $T$-invariant curves are encoded in Table \ref{tab_defp3divisors} and $\ref{tab_defp3intersections}$ respectively, using the same notation as in the previous example.
\begin{table}[!ht]
\begin{center}
\begin{tabular}{ r | c c c c}
   & $\mathcal{D}_1$ & $\mathcal{D}_2$ & $\mathcal{D}_3$ & $\mathcal{D}_4$  \\ \hline

  \multirow{2}{*}{$D_{[0], (\nicefrac{2}{3}, \nicefrac{1}{2})}$} & $\nicefrac{1}{6}[0]$ & $\nicefrac{5}{18}[0]\vslimminus\nicefrac{1}{9}[1]\vslimminus\nicefrac{1}{6}[\infty]$ & $\nicefrac{1}{4}[0] \vslimminus \nicefrac{1}{12}[1]\vslimminus\nicefrac{1}{6}[\infty]$ & $0$  \\
  & $(0, 0)$ & $(\nicefrac{-1}{6}, 0)$ & $(0, \nicefrac{-1}{6})$ & $(0, 0)$ \\ \hline
  
	\multirow{2}{*}{$D_{[1], (\nicefrac{-2}{3}, \nicefrac{-1}{2})}$} & $\nicefrac{1}{6}[1]$ & $\nicefrac{1}{9}[0] \vslimplus \nicefrac{1}{18}[1] \vslimminus \nicefrac{1}{6}[\infty]$ & $\nicefrac{1}{12}[0] \vslimplus \nicefrac{1}{12}[1] \vslimminus \nicefrac{1}{6}[\infty]$ & $0$  \\
	& $(0, 0)$ & $(\nicefrac{-1}{6}, 0)$ & ${(0, \nicefrac{-1}{6})}$ & ${(0, 0)}$  \\ \hline

	\multirow{2}{*}{$D_{[\infty], (\slimminus1, \slimminus1)}$} & $0$ & $\nicefrac{2}{3}[0] \vslimminus \nicefrac{2}{3}[1]$ & $\nicefrac{1}{2}[0] \vslimminus \nicefrac{1}{2}[1]$ & $[\infty]$  \\
	& $(0, 0)$ & $(\slimminus1, 0)$ & ${(0, \slimminus1)}$ & ${(0, 0)}$  \\ \hline

	\multirow{2}{*}{$D_{\mathbb{Q}_{\geq 0} \cdot (1, 0)}$} & $\nicefrac{-2}{3}[0] \vslimplus \nicefrac{2}{3}[1]$ & $0$ & $\nicefrac{-1}{6}[0] \vslimplus \nicefrac{1}{6}[1]$ & $[\infty]$  \\
	& $(0, 0)$ & $(\nicefrac{-1}{6}, 0)$ & ${(0, \nicefrac{-1}{6})}$ & ${(0, 0)}$  \\ \hline

	\multirow{2}{*}{$D_{\mathbb{Q}_{\geq 0} \cdot (0, 1)}$} & $\nicefrac{-1}{2}[0] \vslimplus \nicefrac{1}{2}[1]$ & $\nicefrac{1}{6}[0] \vslimminus \nicefrac{1}{6}[1]$ & $0$ & $[\infty]$  \\
	& $(0, 1)$ & $(\slimminus1, 1)$ & ${(0, 0)}$ & ${(0, 1)}$  \\ \hline
	
	\end{tabular}
	\caption{Torus invariant divisors on $TV(\mathcal{S})$}
	\label{tab_defp3divisors}
	\end{center}
	\end{table}

\begin{table}[!ht]
\begin{center}
\begin{tabular}{ r | c c c c c}
   & $C_{\tau_{1, 2}}$ & $C_{\tau_{2,3}}$ & $C_{\tau_{1, 3}}$ & $C_{\sigma_1}$\\ \hline

  $D_{[0], (\nicefrac{2}{3}, \nicefrac{1}{2})}$ & \nicefrac{1}{6}& \nicefrac{1}{6}& \nicefrac{1}{6}&  \nicefrac{1}{6}  \\ \hline
$D_{[1], (\nicefrac{-2}{3}, \nicefrac{-1}{2})}$ & \nicefrac{1}{6}& \nicefrac{1}{6}& \nicefrac{1}{6}&  \nicefrac{1}{6}  \\ \hline
$D_{[\infty], (\slimminus1, \slimminus1)}$      & 1& 1& 1&  1  \\ \hline
$D_{\mathbb{Q}_{\geq 0} \cdot (1, 0)}$          & 1& 1& 1&  1  \\ \hline
$D_{\mathbb{Q}_{\geq 0} \cdot (0, 1)}$          & 1& 1& 1&  1  \\ \hline
\end{tabular}
\end{center}
\caption{Intersections on $TV(\mathcal{S})$}
\label{tab_defp3intersections}
\end{table}

\end{document}